\documentclass{article}

\usepackage{makeidx}
\usepackage{latexsym}
\usepackage{amsfonts}
\usepackage{amssymb}
\usepackage{amsmath}
\usepackage{amstext}
\usepackage{amsthm}
\usepackage{mathrsfs}
\usepackage{color}
\usepackage{geometry}
\usepackage{graphicx}
\usepackage{mathabx}
\usepackage{float}
\usepackage{tikz}
\usetikzlibrary{arrows}
\usepackage{bm}
\usepackage{caption}
\usepackage{subcaption}

\addtolength{\textheight}{2cm}
\addtolength{\voffset}{-1cm}

\newtheorem{theorem}{Theorem}
\newtheorem{example}[theorem]{Example}

\newtheorem{lemma}[theorem]{Lemma}

\newtheorem{corollary}[theorem]{Corollary}
\newtheorem{definition}[theorem]{Definition}

\newtheorem{proposition}[theorem]{Proposition}

\DeclareMathOperator*{\argmin}{arg\,min}

\begin{document}

\title{Paths and flows for centrality measures in networks}
\author{\textbf{Daniela Bubboloni} \\
{\small {Dipartimento di Matematica e Informatica U.Dini} }\\
\vspace{-6mm}\\
{\small {Universit\`{a} degli Studi di Firenze} }\\
\vspace{-6mm}\\
{\small {viale Morgagni 67/a, 50134 Firenze, Italy}}\\
\vspace{-6mm}\\
{\small {e-mail: daniela.bubboloni@unifi.it}}\\
\vspace{-6mm}\\
{\small https://orcid.org/0000-0002-1639-9525} \and \textbf{Michele Gori}
\\
{\small {Dipartimento di Scienze per l'Economia e  l'Impresa} }\\
\vspace{-6mm}\\
{\small {Universit\`{a} degli Studi di Firenze} }\\
\vspace{-6mm}\\
{\small {via delle Pandette 9, 50127 Firenze, Italy}}\\
\vspace{-6mm}\\
{\small {e-mail: michele.gori@unifi.it}}\\
\vspace{-6mm}\\
{\small https://orcid.org/0000-0003-3274-041X}\\
\vspace{-6mm}\\}

\maketitle

\begin{abstract}
\noindent We consider the number  of paths
that must pass through a subset $X$ of vertices of a network $N$ in a maximum sequence of arc-disjoint paths connecting two vertices $y$ and $z$. We show that when $X$ is a singleton, that number equals the difference between the maximum flow value from $y$ to $z$ in $N$ and the maximum flow value from $y$ to $z$ in the network obtained by $N$ setting to zero the capacities of arcs incident to $X$. That fact theoretically justifies the common identification of those two concepts in network literature. We also show that the same equality does not hold when $|X|\geq 2.$ Consequently, two conceptually different group centrality measures involving paths and flows can naturally be defined, both extending the classic flow betweenness centrality. 
\end{abstract}
\vspace{4mm}

\noindent \textbf{Keywords:} \noindent network flow; arc-disjoint paths; flow betweenness; group centrality.

\noindent \textbf{MSC classification:} 05C21, 05C22, 94C15.

\section{Introduction}

The concept of flow is certainly one of the most fruitful concepts in network theory with a pletora of recent applications varying from transport engineering to social choice theory and financial networks (see, for instance, Trimponias et al., 2017; Bubboloni and Gori, 2018;  Eboli, 2019). The powerful maxflow-mincut theorem by Ford and Fulkerson (1956) immensely contributed to the success of that concept giving rise, among other things, to the manageable augmenting path algorithm for computing maximum flows. Flows and paths are also at the core of the well-known centrality measure called {\it flow betweenness} due to Freeman et al. (1991). That centrality measure is defined, for a network $N$ with vertex set $V$ and for $x\in V$, by
\begin{equation*}
\label{flow-bet-original}
\Lambda^N(x)=\sum_{\substack{{y,z\in V\setminus\{x\}}\\{y \ne z,}}}\lambda_{yz}^N(x),
\end{equation*}
where $\lambda^N_{yz}(x)$, introduced as ``the maximum flow from $y$ to $z$ that passes through the vertex $x$'' (Freeman et al., 1991, pp.147-148), formally corresponds to the number  of paths that {\it must} pass through $x$ in a maximum sequence of arc-disjoint paths in $N$ connecting $y$ and $z$. The flow betweenness, and some of its variations, are present in UCINET (Borgatti et al. 2002) and in the R sna package (R Development Core Team, 2007). 

Borgatti and Everett (2006, p.475) observe that, since there is not in general a unique maximum sequence of arc-disjoint paths between two vertices, ``flow betweenness cannot be calculated directly by counting paths''. Thus, they explain that in UCINET the number $\lambda_{yz}^N(x)$ is computed as the difference $\varphi^N_{yz}(x)$ of $\varphi_{yz}^N$ and $\varphi_{yz}^{N_{x}}$, where $\varphi_{yz}^N$ is the maximum flow value from $y$ to $z$ in $N$ and $\varphi_{yz}^{N_{x}}$ is the maximum flow value from $y$ to $z$ in the network $N_x$ obtained by $N$ by setting to zero all the capacity on arcs incident to $x$. In other words, $\lambda_{yz}^N(x)$ is replaced by the amount of flow which get lost when all the communications through $x$ are interrupted. The number $\lambda_{yz}^N(x)$ is replaced by $\varphi_{yz}^N(x)$ in the  R sna package  too. In fact, the computation of $\varphi^N_{yz}(x)$ is much less expensive than the one of $\lambda^N_{yz}(x)$ since the two numbers $\varphi_{yz}^N$ and $\varphi_{yz}^{N_{x}}$ can be simply computed via the augmenting path algorithm. 

The identification of $\lambda_{yz}^N(x)$ and $\varphi^N_{yz}(x)$ is indeed very common in the literature (Kosch\"utzki et al., 2005; G\'omez et al., 2013) but, at the best of our knowledge, it is not supported by any rigorous proof. 
The main result of the paper is just the proof that the equality 
\begin{equation}\label{main-equality}
\lambda^N_{yz}(x)=\varphi^N_{yz}(x)
\end{equation}
holds true (Theorem \ref{main}).

Proving \eqref{main-equality} is not a trivial exercise but requires instead a quite sophisticated argument involving some delicate aspects of flow theory and, in particular, the Flow Decomposition Theorem (Theorem \ref{decomposition}). The heart of the matter is that, as shown in detail in Section \ref{proof-main}, it is possible to reconstruct flows from the knowledge of paths and conversely to derive paths from the knowledge of flows. As a consequence, in many situations one can conceptually interchange flows and paths but that interchange is not obvious at all.

In the paper, we also study the natural extensions of $\lambda_{yz}^N(x)$ and $\varphi_{yz}^N(x)$ to the case where sets of vertices are considered. Given a subset $X$ of vertices, we   denote by $\lambda^N_{yz}(X)$ the number of paths that must pass through $X$ in a maximum sequence of arc-disjoint paths connecting two distinct vertices $y$ and $z$. Moreover, we denote by $\varphi^N_{yz}(X)$ the difference of $\varphi_{yz}^N$ and $\varphi_{yz}^{N_{X}}$, where $N_X$ is the network obtained by $N$ by setting to zero all the capacities related to arcs incident to $X$. On the basis of \eqref{main-equality}, one might expect that $\lambda^N_{yz}(X)$ equals $\varphi^N_{yz}(X)$ but that is not true, in general, when $X$ is not a singleton (Proposition \ref{bang}). That fact makes clear that $\lambda^N_{yz}(X)$ and $\varphi^N_{yz}(X)$ are in fact different concepts based on diverse rationales. 

In the last part of the paper, using the quantities $\lambda^N_{yz}(X)$ and $\varphi^N_{yz}(X)$, two new group centrality measures are proposed 
(Section \ref{threecm}). The first one,  based on $\lambda^N_{yz}(X)$ and denoted by $\lambda$, is called {\it full flow  betweenness group centrality measure}; the second one,  based on $\varphi^N_{yz}(X)$ and denoted by $\varphi$, is called {\it full flow vitality group centrality measure}. Those group centrality measures, are inspired to the approach by Freeman (1979), in the sense that they respectively take into consideration the ratios $\lambda^N_{yz}(X)/\lambda^N_{yz}(V)$ and the ratios $\varphi^N_{yz}(X)/\varphi^N_{yz}(V)$.  Some preliminary comments and comparisons on $\lambda$ and $\varphi$ are finally presented.

The paper is organized as follows. In Section \ref{main-definitions} some well-known concepts of network theory are recalled, among which the one of flow, generalized path, path, cycle and sequence of arc-disjoint paths. We propose precise and formal definitions in order to fix notation and allow the proofs to run smoothly. We define then the two main concepts of our research, namely the numbers $\varphi^N_{yz}(X)$ and $\lambda^N_{yz}(X)$. 
In Section \ref{proof-main} we explain how to recover flows from the knowledge of sequences of arc-disjoint paths and conversely. Section \ref{dimo} is devoted to  some instrumental results that allow to produce, in Section \ref{proofs}, the proof of our main results about the numerical relation between $\varphi^N_{yz}(X)$ and $\lambda^N_{yz}(X)$. In Section \ref{further} we study the link among $\varphi^N_{yz}(X)$, $\lambda^N_{yz}(X)$ and the global flow that must pass through $X$ in any maximum flow, interestingly showing that they coincide when $X$ is a singleton.
In Section \ref{threecm} we introduce the two flow group centrality measures $\lambda$ and $\varphi$ and we comment on them. The conclusions close the paper.

\section{Main definitions}\label{main-definitions}

\subsection{Notation and preliminary definitions}\label{sec-2}

Throughout the paper, $\mathbb{N}$ denotes the set of positive integers and
$\mathbb{N}_0= \mathbb{N}\cup\{0\}.$  If $m\in \mathbb{N}_0$ we set $[m]=\{n\in \mathbb{N}: n\leq m\}$. In particular, $[0]=\varnothing$ and $|[m]|=m$ for all $m\in \mathbb{N}_0$.  As usual, the sum of real numbers (of real valued functions) over an empty set of indices is assumed to be the number $0$ (the constant function $0$). 

Let $X$ be a (possibly empty) set and $m\in \mathbb{N}$. A sequence of $m$ elements in $X$ is an element of the cartesian product $X^m$. Given $\bm{x}=(x_j)_{j\in [m]}=(x_1,\ldots,x_m)\in X^m$ and $j\in [m]$, we say that $x_j\in X$ is the $j$-th component of $\bm{x}$. Of course, different components of the same sequence can be equal. Note that $X^m\neq \varnothing$ if and only if $X\neq \varnothing$ so that there are sequences of $m$ elements in $X$ if and only if $X\neq\varnothing$.
We also set $X^0=\{()\}$ and call the symbol $()$ the sequence of $0$ elements of $X$. In order to have a uniform notation for sequences of $0$ elements of $X$ and sequences of $m\ge 1$ elements of $X$, we will always interpret as $()$ any writing of the type $(x_j)_{j\in [0]}$. 
Finally, given two sequences of elements of $X$, we say that they are {\it equivalent} if they both have $0$ elements or if they have the same number of elements and one can be obtained from the other by a permutation of the components.

Let $V$ be a finite set with $|V|\ge 2$. The complete digraph on $V$ is the digraph $K_V=(V,A)$ with vertex set $V$ and arc set $A=\{(x,y)\in V^2: x\neq y\}$. 
Note that in a complete digraph the set $A$ of arcs is completely determined by the choice of the set $V$ of vertices.
The set of the complete digraphs is denoted by $\mathscr{K}$. A network is a pair $N=(K_V,c)$, where $K_V=(V,A)\in \mathscr{K}$ and $c$ is a function 
from $A$ to $\mathbb{N}_0$ called capacity. If convenient we will also indicate a network in a more detailed way by $N=(V,A,c)$.
The set of networks is denoted by $\mathscr{N}$.\footnote{To avoid insidious set theory issues one can, of course, assume $V\subseteq \mathbb{N}$.}

\subsection{Flows in a network}

Let $N=(K_V,c)\in \mathscr{N}$ be fixed in the rest of this section. If $a=(x,y)\in A$ we call $x$ and $y$ the endpoints of $a$. Moreover, we say that $a$ exits from $x$ and enters in $y$ . Let $X\subseteq V$. An arc $a\in A$ is called incident to $X$ if at least one of its endpoints belongs to $X$.
We define 
\begin{equation*}\label{archi}
A_X^+:=\{(x,u)\in A: x\in X, u\in V\setminus X\},\quad A_X^-:=\{(u,x)\in A: x\in X, u\in V\setminus X\}, \quad A_X:= A_X^+\cup A_X^-.
\end{equation*}
Note that $A_X$ is the set of arcs in $A$ with a unique endpoint belonging to $X$.\footnote{Throughout the paper, in all the writings involving a subset $X$ of the set of vertices of a network, we write $x$ instead of $X$ when $X=\{x\}$, for some vertex $x$. Thus, for instance, we write $A^+_x$ instead of $A^+_{\{x\}}$.}
We define by $N_X$  to be the network $(K_V,c_X)\in \mathscr{N}$ where, for every $a\in A$,
\[
c_X(a):=\left\{
\begin{array}{ll}
0 & \mbox{if}\ a\mbox{ is incident to } X\\
c(a) & \mbox{otherwise}.
\end{array}
\right.
\]
Within flow theory, the capacity of $X$ is defined by $c(X):=\sum_{a\in A_X^+}c(a).$
Note that if $x\in V$, then $c(x)$ is the so-called outdegree of $x$ while
$c(V\setminus\{x\})$ is the so-called indegree of $x$.

Let $y,z\in V$ be distinct. Recall that a flow from $y$ to $z$ in $N$ is a function $f:A\to \mathbb{N}_0$ such that, for every $a\in A$,
\begin{equation}\label{compatibility}
0\le f(a)\le c(a) \quad \hbox{(compatibility)}
\end{equation}
and, for every $x\in V\setminus\{y,z\}$,
\begin{equation}\label{conservation}
\sum_{a\in A_x^-}f(a)=\sum_{a\in A_x^+}f(a)\quad \hbox{(conservation law)}.
\end{equation}
The function $f_0:A\to \mathbb{N}_0$ defined by $f_0(a)=0$ for all $a\in A$ is a flow, called the  null flow.
We denote the set of flows from $y$ to $z$ in $N$ by $ \mathcal{F}(N,y,z)$.

When we represent networks and flows via a figure, we are going to use some standard conventions: a single number attached to an arc represents the capacity of that arc; two numbers attached to an arc respectively represent the flow and the capacity of that arc; if an arc is not drawn, then its capacity is zero. 

Recall that, given $f\in  \mathcal{F}(N,y,z)$,  the  value of $f$ is the non-negative integer
\[
v(f):=\sum_{a\in A_y^+}f(a)-\sum_{a\in A_y^-}f(a).
\]
The number
\[
\varphi^N_{yz}:=\max_{f\in  \mathcal{F}(N,y,z)} v(f),
\]
is called the maximum flow value from $y$ to $z$ in $N$. 
If $f\in  \mathcal{F}(N,y,z)$ is such that $v(f)=\varphi^N_{yz}$, then $f$ is called a  maximum flow from $y$ to $z$ in $N$. We denote the set of maximum flows from $y$ to $z$ in $N$ by $ \mathcal{M}(N,y,z)$. 

Given $N'=(K_V,c')\in\mathscr{N}$ with $c'\le c$, it is immediate to observe  that 
\begin{equation}\label{utile-inclusione}
\mathcal{F}(N',y,z)\subseteq \mathcal{F}(N,y,z),
\end{equation}
and 
\begin{equation}\label{utile-disu}
\varphi^{N'}_{yz}\le \varphi^{N}_{yz}.
\end{equation}
Let us introduce now an important definition. 

\begin{definition}\label{flow-per-x}
Let $f\in \mathcal{F}(N,y,z)$. For every $x\in V$, we set
\begin{equation*}\label{fx}
f(x):=
\left\{
\begin{array}{ll}
\displaystyle\sum_{a\in A_x^+}f(a) &\mbox{ if }x\not\in \{y,z\}\\
\vspace{-2mm}\\
v(f) &\mbox{ if }x\in \{y,z\}.
\end{array}
\right.
\end{equation*}
For every $X\subseteq V$, we next set
\begin{equation*}\label{fX}
f(X):=\sum_{x\in X}f(x)
\end{equation*}
and we call $f(X)$ the flow that passes through $X$ in the flow $f$. 
\end{definition}
Note that $f(X)\ge 0$ and that  if $X\cap \{y,z\}\neq \varnothing$, then $f(X)\geq v(f)$.

\subsection{The number $\varphi_{yz}^N(X)$}

Let us introduce now the first main concept of our research, namely the number $\varphi^N_{yz}(X)$.

\begin{definition} \label{phi}
Let  $N=(K_V,c)\in \mathscr{N}$, $y,z\in V$ be distinct and $X\subseteq V$. We define
\[
\varphi^N_{yz}(X):=\varphi^N_{yz}-\varphi^{N_X}_{yz},
\]
\end{definition}

The number $\varphi^N_{yz}(X)$ represents the falling of maximum flow value from $y$ to $z$ in $N$ when the capacity of all the arcs incident to $X$ are set to zero. Note that $X\cap \{y,z\}\neq\varnothing$ implies $\varphi^{N}_{yz}(X)=\varphi^{N}_{yz}$. The next proposition states a useful monotonicity property of $\varphi^{N}_{yz}(X)$.

\begin{proposition}\label{monot-phi}
Let  $N=(K_V,c)\in \mathscr{N}$, $y,z\in V$ be distinct and $X\subseteq Y\subseteq V$. Then 
\[
0\le \varphi^N_{yz}(X)\le \varphi^N_{yz}(Y).
\]
\end{proposition}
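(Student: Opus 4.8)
The plan is to unwind the definition $\varphi^N_{yz}(X)=\varphi^N_{yz}-\varphi^{N_X}_{yz}$ and reduce both inequalities to the monotonicity fact \eqref{utile-disu}, namely that shrinking the capacity function cannot increase the maximum flow value.

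For the left inequality $0\le \varphi^N_{yz}(X)$, I would simply observe that $c_X\le c$ by the very construction of $N_X$, so \eqref{utile-disu} applied to the pair $N_X,N$ gives $\varphi^{N_X}_{yz}\le \varphi^N_{yz}$, i.e.\ $\varphi^N_{yz}(X)\ge 0$.

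For the right inequality, after cancelling the common term $\varphi^N_{yz}$ it suffices to prove $\varphi^{N_Y}_{yz}\le \varphi^{N_X}_{yz}$. The key (and essentially only) step is to check that $c_Y\le c_X$ pointwise on $A$. This follows from a short case analysis on an arc $a\in A$, using that $X\subseteq Y$ forces every arc incident to $X$ to be incident to $Y$: if $a$ is incident to $X$ then $c_X(a)=c_Y(a)=0$; if $a$ is incident to $Y$ but not to $X$ then $c_Y(a)=0\le c(a)=c_X(a)$; and if $a$ is incident to neither, then $c_X(a)=c_Y(a)=c(a)$. Having established $c_Y\le c_X$, a second application of \eqref{utile-disu}, now to the pair $N_Y,N_X$, yields $\varphi^{N_Y}_{yz}\le \varphi^{N_X}_{yz}$, which rearranges to $\varphi^N_{yz}(X)\le \varphi^N_{yz}(Y)$.

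I do not expect any genuine obstacle: the statement is a reformulation of the monotonicity of the maximum flow value under capacity reduction, combined with the elementary remark that zeroing out the capacities of a larger family of incident arcs produces a smaller capacity function. The only point requiring a little care is the case analysis establishing $c_Y\le c_X$, where one must invoke the implication ``incident to $X$ $\Rightarrow$ incident to $Y$'' in the right place.
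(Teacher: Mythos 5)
Your proposal is correct and follows essentially the same route as the paper: establish $c_Y\le c_X\le c$ and apply the monotonicity \eqref{utile-disu} of the maximum flow value under capacity reduction. The only difference is that you spell out the case analysis verifying $c_Y\le c_X$, which the paper states without detail.
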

\begin{proof}
By Definition \ref{phi}, we have that
$\varphi^N_{yz}(X)= \varphi^N_{yz}-\varphi^{N_X}_{yz}$ and $\varphi^N_{yz}(Y)= \varphi^N_{yz}-\varphi^{N_Y}_{yz}$. Since $X\subseteq Y$ we have that 
$N_X=(K_V,c_X)$ and $N_Y=(K_V,c_Y)$ are such that $c_Y\le c_X\le c$. Thus, by \eqref{utile-disu}, we have that
$\varphi^{N_Y}_{yz}\le \varphi^{N_X}_{yz}\le \varphi^{N}_{yz}$ which in turn implies $0\le \varphi^N_{yz}(X)\le \varphi^N_{yz}(Y)$, as desired.
\end{proof}

\subsection{Generalized paths and cycles in a complete digraph} \label{paths and cycles}

Let $K_V\in\mathscr{K}$ and $y,z\in V$ be distinct. Consider a pair $\gamma=((x_1,\ldots,x_m),(a_1,\ldots,a_{m-1}))$, where $m\geq 2$, $x_1,\ldots,x_m\in V$ are called the vertices of $\gamma$, $a_1,\ldots,a_{m-1}\in A$ are called the arcs of $\gamma$. The set of vertices of $\gamma$ is denoted by $V(\gamma)$ and the set of arcs  by $A(\gamma)$. Given $X\subseteq V$, we say that 
$\gamma$ passes through $X$ if $X\cap V(\gamma)\neq \varnothing$. We are interested in the following specifications for $\gamma$.
\begin{itemize}
\item[1.] $\gamma$ is called  a {\it generalized path}  in $K_V$ if $x_1,\ldots,x_m$ are distinct and, for every $i\in [m-1]$, $a_i=(x_i,x_{i+1})$ or $a_i=(x_{i+1},x_{i})$. If $a_i=(x_i,x_{i+1})$, $a_i$ is called a {\it forward arc}; if $a_i=(x_{i+1},x_{i})$, $a_i$ is called a {\it backward arc}.  Note that, as a consequence, $a_1,\ldots,a_{m-1}$ are distinct too.
The set of forward arcs  is denoted by $A(\gamma)^+$; the set of backward arcs  by $A(\gamma)^-$. Clearly,  we have $A(\gamma)=A(\gamma)^+\cup A(\gamma)^-$ and $A(\gamma)^+\cap A(\gamma)^-=\varnothing$. We say that $\gamma$ is a generalized path from $y$ to $z$ if $x_1=y$ and $x_m=z$.
\item[2.] $\gamma$ is called a {\it path} in $K_V$ if  $\gamma$ is a generalized path and $A(\gamma)^-=\varnothing$ or, equivalently $A(\gamma)=A(\gamma)^+$.
\item[3.] $\gamma$ is called a {\it cycle} in $K_V$ if $m\ge 3$, $x_1,\ldots,x_{m-1}$ are distinct elements of $V$ while $x_m=x_1$ and, for every $i\in \{1,\ldots,m-1\}$, $a_i=(x_i,x_{i+1})$.
\end{itemize}
Let $\gamma=((x_1,\ldots,x_m),(a_1,\ldots,a_{m-1}))$ be a path or a cycle in $K_V$. Then $\gamma$ is completely determined by its vertices and thus we usually write $\gamma=x_1\cdots x_m.$ Of course, the same simple notation is not possible for generalized paths that are not paths.

\subsection{Arc-disjoint sequences of paths in a network}\label{intro-lambda}

Let $N=(K_V,c)\in \mathscr{N}$. A path (cycle) $\gamma$ in $K_V$ is called a path (cycle) in $N$ if, for every arc $a\in A(\gamma)$, $c(a)\geq 1$.
The set of paths from $y$ to $z$ in $N$ is denoted by $P^N_{yz}$. The set of cycles in $N$ is denoted by $C^N$. We give no meaning to generalized paths in $N$. 

\begin{definition}\label{arc-disj-def}
Given $m\in\mathbb{N}_0$, a sequences of $m$ paths $\bm{\gamma}=(\gamma_{j})_{j\in[m]}\in (P^N_{yz})^m$
is called {\it arc-disjoint}  if, for every $a\in A$,
\begin{equation}\label{ind-flow-eq}
|\{j\in [m]: a\in A(\gamma_{j})\}|\le c(a).
\end{equation}
\end{definition}
Trivially, if $\bm{\gamma}$ is arc-disjoint and $\bm{\gamma}'$ is equivalent to $\bm{\gamma}$, then $\bm{\gamma}'$ is arc-disjoint too.
We denote the set of sequences of $m$ arc-disjoint paths from $y$ to $z$ in $N$ by $\mathcal{S}^{N,m}_{yz}$. Note that $\mathcal{S}^{N,0}_{yz}=\{(\,)\}$ and that
$P^N_{yz}=\emptyset$ implies $\mathcal{S}^{N,m}_{yz}=\emptyset$ for all $m\geq 1$.
The set of sequences of arc-disjoint paths from $y$ to $z$ in $N$ is defined by
\begin{equation*}
\mathcal{S}_{yz}^{N}:=\displaystyle\bigcup_{m\in\mathbb{N}_0}\mathcal{S}^{N,m}_{y,z}.
\end{equation*}
Note that, since $(\,)\in \mathcal{S}_{yz}^{N}$, we always have $\mathcal{S}_{yz}^{N}\ne\varnothing$. Moreover, $P^N_{yz}=\varnothing$ if and only if $\mathcal{S}_{yz}^{N}=\{(\,)\}$.
If $\bm{\gamma}\in \mathcal{S}_{yz}^{N,m}$, we say that the length of $\bm{\gamma}$ is $m$ and we write $l(\bm{\gamma})=m$.
Observe that, $l(\bm{\gamma})=0$ if and only if $\bm{\gamma}=(\,)$.
We also set
\begin{equation*}
\label{eqn:lambdast}
\lambda^N_{yz}:=\max\left\{m\in\mathbb{N}_0:\,\mathcal{S}_{yz}^{N,m}\ne \emptyset\right\}.
\end{equation*}
Note that $\lambda^N_{yz}$ is  the maximum length of a sequence of arc-disjoint paths from $y$ to $z$ in $N$ and that $\lambda^N_{yz}=0$ if and only if $P^N_{yz}=\varnothing$. The set of sequences of arc-disjoint paths from $y$ to $z$ in $N$ having maximum length is defined by
\begin{equation*}
\label{eqn:maxsetarc-disjointpaths}
\mathcal{M}_{yz}^{N}:=\mathcal{S}^{N,\lambda^{N}_{yz}}_{yz}=\left\{\bm{\gamma}\in\mathcal{S}^{N}_{yz}:\,l(\bm{\gamma})=\lambda^{N}_{yz}\right\}.
\end{equation*}
By Lemma 7.1.5 in Bang-Jensen and Gutin (2008), we know that
\begin{equation}
\label{lex:disjointpaths}
\varphi^N_{yz}=\lambda^N_{yz}.
\end{equation}
Hence, we also have $\mathcal{M}_{yz}^{N}=\mathcal{S}^{N,\varphi^{N}_{yz}}_{yz}$ so that if $\bm{\gamma}\in\mathcal{M}^{N}_{yz}$,
then $l(\bm{\gamma})=\varphi^{N}_{yz}$.

We emphasize that, given $\bm{\gamma}=(\gamma_{j})_{j\in[m]}\in \mathcal{S}^{N,m}_{yz}$, where $m\in \mathbb{N}$ and $m<n\leq \varphi^{N}_{yz}$ (so that $\bm{\gamma}$ has not maximum length), then it is not generally guaranteed that there exists a sequence $(\gamma_{j})_{j\in\{m+1,\dots, n\}}$ of  $n-m$ arc-disjoint paths from $y$ to $z$ in $N$ such that  $(\gamma_{j})_{j\in[n]}\in  \mathcal{S}^{N,n}_{yz}$. In other words, one cannot generally add paths to a sequence of arc-disjoint paths to get a new sequence of arc-disjoint paths of higher length. That fact surely introduces an element of complexity  in treating the sequences of  arc-disjoint paths. We will discuss in more detail that issue after having presented
the Flow Decomposition Theorem (Theorem \ref{decomposition}) in Section \ref{flows-vs-paths}.

Given $\bm{\gamma}\in\mathcal{S}_{yz}^{N}$ and $X\subseteq V$, we denote now by $l_X(\bm{\gamma})$ the number of components of $\bm{\gamma}$ passing through $X$. Formally,
if $\bm{\gamma}=(\gamma_j)_{j\in [m]}\in\mathcal{S}_{yz}^{N}$, where $m\in\mathbb{N}_0$, we set
\[
l_X(\bm{\gamma}):=|\{j\in [m]: \gamma_j\mbox{ passes through }X\}|.
\]
Note that if $\bm{\gamma}, \bm{\gamma}' \in \mathcal{S}_{yz}^{N}$ are equivalent, then $l_X(\bm{\gamma})=l_X(\bm{\gamma}').$

\subsection{The number $\lambda^N_{yz}(X)$ }\label{main-def}

We now have all the tools for providing the definition of the other main concept of our research, namely the number $\lambda^N_{yz}(X)$.\footnote{When $X$ is a singleton, a definition similar to Definition \ref{lambda} was proposed in Ghiggi (2018).} 

\begin{definition} \label{lambda}
Let  $N=(K_V,c)\in \mathscr{N}$, $y,z\in V$ be distinct and $X\subseteq V$. We define
\[
\lambda^{N}_{yz}(X):= \min_{\bm{\gamma}\in\mathcal{M}_{yz}^{N}}l_X(\bm{\gamma}).
\]
\end{definition}
 
The number $\lambda^N_{yz}(X)$ represents the number of paths
that must pass through  $X$ in a maximum sequence of arc-disjoint paths connecting the vertices $y$ and $z$.
Since $\mathcal{M}_{yz}^{N}\ne\varnothing$, $\lambda^{N}_{yz}(X)$ is well defined. 
Note that $X\cap \{y,z\}\neq \varnothing$ implies $\lambda^N_{yz}(X)= \varphi^N_{yz}$. 
Moreover, given $X\subseteq Y\subseteq V$, it is immediate to prove that $0\le \lambda^N_{yz}(X)\le \lambda^N_{yz}(Y)$.
Finally, $\lambda^{N}_{yz}(X)=0$ if and only if there exists $\bm{\gamma}\in\mathcal{M}^{N}_{yz}$ such that $l_X(\bm{\gamma})=0$, that is, none of the paths appearing as components of  $\bm{\gamma}$ passes through $X$.  

We also set
\begin{equation*}
\label{eqn:mpnx}
\mathcal{M}^{N}_{yz}(X):=\displaystyle\argmin_{\bm{\gamma}\in\mathcal{M}^{N}_{yz}}l_X(\bm{\gamma}).
\end{equation*}
Note that $\mathcal{M}^{N}_{yz}(X)\ne\varnothing$ and that if $\bm{\gamma}\in\mathcal{M}^{N}_{yz}(X)$, then $l_X(\bm{\gamma})=\lambda^{N}_{yz}(X)$ and $l(\bm{\gamma})=\varphi^{N}_{yz}.$
In other words, the set  $\mathcal{M}^{N}_{yz}(X)$ collects the maximum sequences of arc-disjoint paths from $y$ to $z$ in $N$ minimally passing through $X.$

\begin{example}\label{figura1}{\rm
In order to clarify Definition \ref{lambda}, let us perform some explicit computations for the network $N$ in Figure \ref{example1}.
\begin{figure}[t]
\begin{center}
\begin{tikzpicture}
\begin{scope}
    \node[style={circle,thick,draw,fill=white}] (y) at (0,0) {$y$};
    \node[style={circle,thick,draw,fill=lightgray}] (a) at (2,1) {$v$};
    \node[style={circle,thick,draw,fill=lightgray}] (c) at (4,1) {$x$};
    \node[style={circle,thick,draw,fill=lightgray}] (b) at (3,-1){$u$};
    \node[style={circle,thick,draw,fill=white}] (z) at (6,0) {$z$} ;
\end{scope}
\begin{scope}[>=stealth, every edge/.style={thick,draw}]    \path [->] (y) edge node[pos=0.5,anchor=south]  {$2$} (a);
    \path [->] (y) edge node[pos=0.5,anchor=north]  {$1$} (b);
    \path [->] (a) edge node[pos=0.5,anchor=south]  {$1$} (c);
    \path [->] (a) edge node[pos=0.5,anchor=east]  {$1$} (b);
		\path [->] (b) edge node[pos=0.5,anchor=west]  {$2$} (c);
		\path [->] (b) edge node[pos=0.5,anchor=north]  {$1$} (z);
		\path [->] (c) edge node[pos=0.5,anchor=south]  {$2$} (z);
   
\end{scope}
\end{tikzpicture}
\end{center}
\caption{}
\label{example1}
\end{figure}
First of all, we have that $P^{N}_{yz}=\{\gamma_1,\gamma_2,\gamma_3,\gamma_4,\gamma_5\}$, where 
$$\gamma_1=yvuxz, \ \gamma_2=yvuz, \ \gamma_3=yvxz, \ \gamma_4=yuxz, \ 
\gamma_5=yuz.$$
Of course, $\mathcal{S}^{N,0}_{yz}=\{()\}$ and $\mathcal{S}^{N,1}_{yz}=\{(\gamma_1),(\gamma_2),(\gamma_3),(\gamma_4),(\gamma_5)\}$.
Up to a reordering of the components, the elements of $\mathcal{S}^{N,2}_{yz}$ are given by 

\[
(\gamma_1,\gamma_3),\;(\gamma_1,\gamma_5),\;(\gamma_2,\gamma_3),\;(\gamma_2,\gamma_4),\;(\gamma_3,\gamma_4),\;(\gamma_3,\gamma_5),
\]
while the elements of $\mathcal{S}^{N,3}_{yz}$ are given by 

\[
\bm{\gamma}'=(\gamma_1,\gamma_3,\gamma_5),\  \bm{\gamma}''=(\gamma_2,\gamma_3,\gamma_4).
\]
Moreover, for every $m\ge 4$, $\mathcal{S}^{N,m}_{yz}=\varnothing$. As a consequence, $\lambda^{N}_{yz}=3$ and $\mathcal{M}_{yz}^{N}=\mathcal{S}^{N,3}_{yz}$.
Considering now $X=\{x\}$, we have that $l_X(\bm{\gamma}')=2$ and $l_X(\bm{\gamma}'')=2$. Thus, 
\[
\lambda^{N}_{yz}(X)=\min_{\bm{\gamma}\in \mathcal{M}_{yz}^{N}}l_X(\bm{\gamma})=\min\{l_X(\bm{\gamma}'), l_X(\bm{\gamma}'')\}=2,
\]
and $\mathcal{M}_{yz}^{N}(X)=\mathcal{M}_{yz}^{N}$. Considering instead $X=\{x,v\}$, 
we have that $l_X(\bm{\gamma}')=2$ and $l_X(\bm{\gamma}'')=3$. Thus,
\[
\lambda^{N}_{yz}(X)=\min_{\mathbf{\gamma}\in \mathcal{M}_{yz}^{N}}l_X(\mathbf{\gamma})=\min\{l_X(\bm{\gamma}'), l_X(\bm{\gamma}'')\}=2,
\]
and the elements of $\mathcal{M}_{yz}^{N}(X)$ are given by $\bm{\gamma}''$ and all the sequences equivalent to $\bm{\gamma}''$.}
\end{example}

We close this section with a final comment about some misunderstandings that appeared in the literature when $X=\{x\}$.
Newman (2005, p.41, note 3), citing Freeman et al. (1991) about their description of $\lambda^{N}_{yz}(x)$, explains that, in order to take into account the fact that there is, in general, more than one maximum sequence of arc-disjoint paths from $y$ to $z$ in $N$, they consider ``the maximum possible flow through $x$ over all possible solutions to the $yz$ maximum flow problem''. Within our notation that means to consider the quantity $\max_{\bm{\gamma}\in\mathcal{M}_{yz}^{N}}l_X(\bm{\gamma})$. The idea to take the maximum, instead of the minimum as in our Definition \ref{lambda}, does not seem in line with the spirit of the original definition by Freeman et al. (1991).  Indeed, $\max_{\bm{\gamma}\in\mathcal{M}_{yz}^{N}}l_X(\bm{\gamma})$ describes the flow that {\it can} pass through $x$, and not the one that {\it must} pass through $x$, in any maximum flow.\footnote{A similar problem seems to be present in the description of $\lambda^{N}_{yz}(x)$ in the recent book by Zweig (2016, p.253).}

\section{Paths and flows}\label{proof-main}

Once the formal definitions of $\varphi_{yz}^N(X)$ and $\lambda_{yz}^N(X)$ are given, our main purpose is to analyse the relation between those numbers. It turns out fundamental to deepen the link between paths and flows. A careful description of that link constitutes the indispensable tool for the proof of our main theorem, namely Theorem \ref{main}. First of all, let us introduce the concepts of generalized path function, path function and cycle function.

\begin{definition}\label{path-function} Let  $K_V\in\mathscr{K}$. If $a\in A$, the arc function associated with $a$ is the function
$\chi_a:A\rightarrow \mathbb{N}_0$ defined by $\chi_a(a)=1$ and $\chi_a(b)=0$ for all $b\in A\setminus\{a\}$.
If $\gamma$ is a generalized path in $K_V$, let $\chi_{\gamma}:A\rightarrow \mathbb{Z}$ be defined by 
\begin{equation}\label{chi-generalizzato}
\chi_{\gamma}:=\sum_{a\in A(\gamma)^+} \chi_{a}- \sum_{a\in A(\gamma)^-} \chi_{a},
\end{equation}
so that, if $\gamma$ is a path, then 
\begin{equation}\label{chi-path}
\chi_{\gamma}=\sum_{a\in A(\gamma)}\chi_{a}.
\end{equation}
If $\gamma$ is a cycle in $K_V$, let $\chi_{\gamma}:A\rightarrow \mathbb{Z}$ be defined by
\begin{equation}\label{chi-cycle}
\chi_{\gamma}:=\sum_{a\in A(\gamma)}\chi_{a}.
\end{equation}
The functions \eqref{chi-generalizzato}, \eqref{chi-path}, \eqref{chi-cycle} are respectively called the generalized path function, the path function and the  cycle functions associated with $\gamma.$
\end{definition}
Note that the generalized path functions assume values in $\{-1,0,1\}$. 
In particular, given a generalized path $\gamma$, we have that $\chi_{\gamma}(a)=-1$ if and only if $a$ is 
a backward arc of $\gamma$. Path functions 
and cycle functions assume instead only values in $\{0,1\}$

\subsection{From paths to flows }\label{paths-vs-flows}

The next result shows how every sequence of  arc-disjoint  paths can define a flow.

\begin{proposition}
\label{ind-flow}
Let $N=(V,A,c)\in \mathscr{N}$, $y,z\in V$ be distinct and $\bm{\gamma}=(\gamma_j)_{j\in[m]}\in \mathcal{S}_{yz}^{N,m}$, for some $m\in \mathbb{N}_0$.
Then the function
$f_{\bm{\gamma}}:A\rightarrow \mathbb{N}_0$ defined, for every $a\in A,$ by
\begin{equation}\label{ind-flow-def}
f_{\bm{\gamma}}(a)=|\{j\in [m]: a\in A(\gamma_{j})\}|
\end{equation}
is a flow from $y$ to $z$ in $N$ with $v(f_{\bm{\gamma}})=m$ and $f_{\bm{\gamma}}= \sum_{j\in[m]} \chi_{\gamma_j}.$
Moreover, for every $x\in V,$ we have
\begin{equation}\label{fgammax}
f_{\bm{\gamma}}(x)=l_x(\bm{\gamma}).
\end{equation}
\end{proposition}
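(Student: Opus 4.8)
The plan is to verify the three defining properties of a flow directly from the definition \eqref{ind-flow-def}, and then compute the value and the quantities $f_{\bm{\gamma}}(x)$.

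First I would check compatibility \eqref{compatibility}. Fix $a\in A$. Clearly $f_{\bm{\gamma}}(a)=|\{j\in[m]:a\in A(\gamma_j)\}|\ge 0$, and since $\bm{\gamma}\in\mathcal{S}^{N,m}_{yz}$ is arc-disjoint, \eqref{ind-flow-eq} gives exactly $f_{\bm{\gamma}}(a)\le c(a)$. Next I would prove the identity $f_{\bm{\gamma}}=\sum_{j\in[m]}\chi_{\gamma_j}$. For a single path $\gamma_j$, by \eqref{chi-path} we have $\chi_{\gamma_j}=\sum_{a\in A(\gamma_j)}\chi_a$, which is the indicator function of $A(\gamma_j)$; summing over $j\in[m]$ and evaluating at an arc $a$ yields $\sum_{j\in[m]}\chi_{\gamma_j}(a)=|\{j\in[m]:a\in A(\gamma_j)\}|=f_{\bm{\gamma}}(a)$, so the two functions agree. (For $m=0$ both sides are the null function $f_0$, which is consistent.)

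Having the representation $f_{\bm{\gamma}}=\sum_{j\in[m]}\chi_{\gamma_j}$, the conservation law \eqref{conservation} and the value computation become a matter of analysing a single path function $\chi_{\gamma_j}$ and then summing. For a path $\gamma_j=x_1\cdots x_k$ from $y$ to $z$ (so $x_1=y$, $x_k=z$), each interior vertex $x_i$ with $2\le i\le k-1$ has exactly one incoming arc $(x_{i-1},x_i)$ and one outgoing arc $(x_i,x_{i+1})$ of $\gamma_j$, all distinct since the $x_i$ are distinct; hence $\sum_{a\in A_x^-}\chi_{\gamma_j}(a)=\sum_{a\in A_x^+}\chi_{\gamma_j}(a)$ equals $1$ if $x$ is an interior vertex of $\gamma_j$ and $0$ if $x\notin V(\gamma_j)\cup\{y,z\}$ (being careful: if $x\notin\{y,z\}$ then $A_x^{\pm}$ here should be read as the full in/out arc sets at $x$, and a path visits $x$ at most once so it contributes at most one in-arc and one out-arc). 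Summing over $j$ gives conservation for $f_{\bm{\gamma}}$ at every $x\in V\setminus\{y,z\}$, and it also shows $\sum_{a\in A_x^+}f_{\bm{\gamma}}(a)=|\{j\in[m]:x\in V(\gamma_j)\setminus\{y,z\}\}|$. For the value, at the source $y$ each path $\gamma_j$ contributes exactly one arc exiting $y$ and no arc entering $y$ (since $y=x_1$ is an endpoint of the path and the vertices are distinct), so $v(f_{\bm{\gamma}})=\sum_{a\in A_y^+}f_{\bm{\gamma}}(a)-\sum_{a\in A_y^-}f_{\bm{\gamma}}(a)=m-0=m$. Thus $f_{\bm{\gamma}}\in\mathcal{F}(N,y,z)$ with $v(f_{\bm{\gamma}})=m$.

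Finally I would establish \eqref{fgammax}. If $x\notin\{y,z\}$, then by Definition \ref{flow-per-x}, $f_{\bm{\gamma}}(x)=\sum_{a\in A_x^+}f_{\bm{\gamma}}(a)$, and by the computation above this equals $|\{j\in[m]:x\in V(\gamma_j)\}|$ (for $x\notin\{y,z\}$, $x\in V(\gamma_j)$ iff $x$ is an interior vertex of $\gamma_j$); since $x\notin\{y,z\}$, $\gamma_j$ passes through $\{x\}$ precisely when $x\in V(\gamma_j)$, so the count equals $l_x(\bm{\gamma})$. If $x\in\{y,z\}$, then $f_{\bm{\gamma}}(x)=v(f_{\bm{\gamma}})=m$ by definition, and since $x\in\{y,z\}$ implies every $\gamma_j\in P^N_{yz}$ passes through $\{x\}$, we also have $l_x(\bm{\gamma})=m$; hence \eqref{fgammax} holds in all cases.

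I do not expect a serious obstacle here; the only delicate point is keeping the bookkeeping at a vertex $x$ straight --- distinguishing the set $A_x$ of arcs with a unique endpoint in $\{x\}$ (which, for a single vertex, is just the set of all arcs incident to $x$) from the in/out partition $A_x^-,A_x^+$, and using that a path, having distinct vertices, meets $x$ at most once and so contributes at most one in-arc and at most one out-arc at $x$. The case $m=0$ should be mentioned once at the start so that all the sums over $j\in[m]$ and the identifications with the null flow are uniform.
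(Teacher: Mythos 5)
Your proof is correct and follows essentially the same route as the paper's: both are direct verifications resting on the fact that a path, having distinct vertices, contributes exactly one in-arc and one out-arc at each interior vertex and exactly one out-arc (and no in-arc) at the source. The only difference is bookkeeping --- you write $f_{\bm{\gamma}}=\sum_{j\in[m]}\chi_{\gamma_j}$ first and swap the order of summation, while the paper counts via the pairwise-disjoint sets $U_a=\{j\in[m]:a\in A(\gamma_j)\}$ --- which amounts to the same computation.
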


\begin{proof} By \eqref{ind-flow-eq}, we immediately obtain that $f_{\bm{\gamma}}$ satisfies the compatibility condition \eqref{compatibility}.
For every $a\in A$, define $U_a=\{j\in [m]: a\in A(\gamma_{j})\}$. Note that $U_a\subseteq [m]$ and $|U_a|=f_{\bm{\gamma}}(a)$.

Let $x\in V$ and $a,b\in A$ with $a\neq b$. If $a,b\in A_x^-$ or if $a,b\in A_x^+$,
then we have
\begin{equation}\label{empty}
U_a\cap U_b=\varnothing.
\end{equation}
Indeed, let $a,b\in A_x^-$ and assume by contradiction that there exists $j\in U_a\cap U_b.$ Then both $a$ and $b$ are arcs of the path $\gamma_j$ entering into its vertex $x$, against the fact that in a path every vertex has at most one arc entering into it. The same argument applies to the case $a,b\in A_x^+$.

Consider now $x\in V\setminus\{y,z\}$. It is immediately checked that
\begin{equation}\label{prelim1}
\bigcup _{a\in A_x^-}U_a=\{j\in[m]: \gamma_j \mbox{ passes through }x\}=\bigcup _{a\in A_x^+}U_a,
\end{equation}
Then, using \eqref{empty} and \eqref{prelim1}, we get
\begin{equation}\label{cons2}
\sum_{a\in A_x^-}f_{\bm{\gamma}}(a)=\sum_{a\in A_x^-}|U_a| =\left|\bigcup _{a\in A_x^-}U_a\right|=l_x(\bm{\gamma})=\left|\bigcup _{a\in A_x^+}U_a\right|=
\sum_{a\in A_x^+}|U_a|=
\sum_{a\in A_x^+}f_{\bm{\gamma}}(a),
\end{equation}
which says that $f_{\bm{\gamma}}$ satisfies the conservation law \eqref{conservation}. Thus, we have proved that $f_{\bm{\gamma}}$ is a flow. By \eqref{cons2}, we  also see that
$$f_{\bm{\gamma}}(x)=
\sum_{a\in A_x^+}f_{\bm{\gamma}}(a)
=l_x(\bm{\gamma}).$$

We next show that
\begin{equation}\label{cons3}
\bigcup _{a\in A_y^+}U_a=[m].
\end{equation}
We surely have $\bigcup _{a\in A_y^+}U_a\subseteq [m]$, so that we are left with proving $ [m]\subseteq\bigcup _{a\in A_y^+}U_a$.
If $m=0$, then $[m]=[0]=\varnothing$ and the desired inclusion immediately holds. Assume next that $m\geq 1.$ Pick $j\in [m]$ and consider $\gamma_j$. Since $y\neq z$, there exists $a\in A(\gamma_j)\cap A_y^+$ and therefore $j\in \bigcup _{a\in A_y^+}U_a.$

We now compute the flow value. Since $U_a$ is empty for $a\in A_y^-$, using \eqref{empty} and \eqref{cons3},
we get
\begin{equation}\label{flowvalue}
v(f_{\bm{\gamma}})=\sum_{a\in A_y^+}|U_a|-\sum_{a\in A_y^-}|U_a|=\sum_{a\in A_y^+}|U_a|=m.
\end{equation}
Now the expression $f_{\bm{\gamma}}=\sum_{j=1}^m \chi_{\gamma_j}$ is an immediate consequence of \eqref{ind-flow-def}  and \eqref{chi-path}.
Finally observe that the equality \eqref{fgammax} holds also for $x\in\{y,z\}$ because, by Definition \ref{flow-per-x} and by \eqref{flowvalue}, we have 
$f_{\bm{\gamma}}(x)=v(f_{\bm{\gamma}})=m=l_x({\bm{\gamma}})$. 
\end{proof}

The above proposition allows  to give an important definition.

\begin{definition}\label{flow-ass-path}
Let $N=(K_V,c)\in \mathscr{N}$, $y,z\in V$ be distinct and $\bm{\gamma}\in \mathcal{S}_{yz}^{N}.$ The flow $f_{\bm{\gamma}}$ defined in \eqref{ind-flow-def} is called the flow associated with $\bm{\gamma}.$ 
\end{definition}

Note that if $\bm{\gamma}, \bm{\gamma}' \in \mathcal{S}_{yz}^{N}$ are equivalent, then $f_{\bm{\gamma}}=f_{\bm{\gamma}'}.$

 \subsection{From flows to paths}\label{flows-vs-paths}

In this section we present the well-known Flow Decomposition Theorem in a form that is useful for our purposes and explore its fundamental consequences for our research.   We will make large use of generalized path functions  and cycle functions (Definition \ref{path-function}).

We start recalling, within our notation, the well-known concept of augmenting path.
A generalized path $\gamma$ from $y$ to $z$  in $K_V$ is called an {\it augmenting path} for $f$ in $N$ if $c(a)-f(a)\geq 1$ for all $a\in A(\gamma)^+$, and $f(a)\geq 1$ for all $a\in A(\gamma)^-$.
We denote by $AP^N_{yz}(f)$ the set of the augmenting paths from $y$ to $z$ for $f$ in $N$.
By the celebrated Ford and Fulkerson Theorem, $f\in \mathcal{M}(N,y,z)$ if and only if $AP^N_{yz}(f)=\varnothing$. The next proposition is  a straightforward but useful interpretation of the flow augmenting path algorithm within our notation.
\begin{proposition}\label{aug} Let $N=(K_V,c)\in \mathscr{N}$, $y,z\in V$ be distinct  and $f\in \mathcal{F}(N,y,z)\setminus \mathcal{M}(N,y,z).$
Then $AP^N_{yz}(f)\neq \varnothing$ and, for every $\sigma\in AP^N_{yz}(f)$, we have that $f+\chi_\sigma\in \mathcal{F}(N,y,z)$ and $v(f+\chi_\sigma)=v(f)+1$.
\end{proposition}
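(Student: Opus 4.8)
The plan is to dispose of the two assertions in turn. Nonemptiness of $AP^N_{yz}(f)$ is immediate: by hypothesis $f\in\mathcal{F}(N,y,z)\setminus\mathcal{M}(N,y,z)$, and the Ford and Fulkerson Theorem recalled just above says $f\in\mathcal{M}(N,y,z)$ if and only if $AP^N_{yz}(f)=\varnothing$, so $AP^N_{yz}(f)\neq\varnothing$. The real content is the second claim, so I would fix an arbitrary $\sigma=((x_1,\dots,x_m),(a_1,\dots,a_{m-1}))\in AP^N_{yz}(f)$ with $x_1=y$ and $x_m=z$, set $g:=f+\chi_\sigma$, and verify that $g$ is a flow from $y$ to $z$ in $N$ and that $v(g)=v(f)+1$.

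First I would check that $g$ is $\mathbb{N}_0$-valued and satisfies the compatibility condition \eqref{compatibility}. On arcs outside $A(\sigma)$ there is nothing to do, since $\chi_\sigma$ vanishes there. On a forward arc $a\in A(\sigma)^+$ one has $g(a)=f(a)+1$, whence $g(a)\ge 1$, while the augmenting-path inequality $c(a)-f(a)\ge 1$ gives $g(a)\le c(a)$. On a backward arc $a\in A(\sigma)^-$ one has $g(a)=f(a)-1$, whence $g(a)\ge 0$ by the augmenting-path inequality $f(a)\ge 1$, and $g(a)\le f(a)\le c(a)$. In particular $g:A\to\mathbb{N}_0$.

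For the conservation law \eqref{conservation} at an internal vertex $x\in V\setminus\{y,z\}$ it suffices, since $f$ already satisfies it, to show $\sum_{a\in A_x^-}\chi_\sigma(a)=\sum_{a\in A_x^+}\chi_\sigma(a)$. If $x\notin V(\sigma)$ both sums vanish. Otherwise $x=x_i$ for a unique $i\in\{2,\dots,m-1\}$, and the only arcs of $\sigma$ incident to $x$ are $a_{i-1}$ and $a_i$; here I would run the short orientation bookkeeping dictated by \eqref{chi-generalizzato}, namely: a forward $a_{i-1}=(x_{i-1},x_i)$ lies in $A_x^-$ with $\chi_\sigma$-value $+1$, a backward $a_{i-1}=(x_i,x_{i-1})$ lies in $A_x^+$ with value $-1$; symmetrically a forward $a_i=(x_i,x_{i+1})$ lies in $A_x^+$ with value $+1$ and a backward $a_i=(x_{i+1},x_i)$ lies in $A_x^-$ with value $-1$. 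Going through the four forward/backward combinations for the pair $(a_{i-1},a_i)$ shows in each case that $\sum_{a\in A_x^-}\chi_\sigma(a)=\sum_{a\in A_x^+}\chi_\sigma(a)$, so $g\in\mathcal{F}(N,y,z)$. For the value I would write $v(g)=v(f)+\bigl(\sum_{a\in A_y^+}\chi_\sigma(a)-\sum_{a\in A_y^-}\chi_\sigma(a)\bigr)$ and note that the only arc of $\sigma$ incident to $y=x_1$ is $a_1$: if $a_1$ is forward then $a_1=(y,x_2)\in A_y^+$ and the parenthesis equals $1-0$; if $a_1$ is backward then $a_1=(x_2,y)\in A_y^-$ and the parenthesis equals $0-(-1)$. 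Either way $v(g)=v(f)+1$.

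I do not expect a genuine obstacle here: the statement is essentially the correctness of a single step of the augmenting-path algorithm, and the only thing to be careful about is the orientation bookkeeping, i.e.\ consistently matching \emph{forward}/\emph{backward} arc of $\sigma$ with arc \emph{exiting}/\emph{entering} a vertex and with the sign prescribed by \eqref{chi-generalizzato}. A slicker alternative to the four-case check would be to record once and for all that a generalized-path function $\chi_\sigma$ obeys the conservation law at every vertex other than its endpoints, with net outflow $+1$ at $y$ and $-1$ at $z$; but since the paper has not packaged generalized-path functions that way, the explicit case analysis on $a_{i-1},a_i$ and on $a_1$ is the most self-contained route.
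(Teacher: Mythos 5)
Your proof is correct. The paper offers no proof of Proposition \ref{aug} at all --- it is introduced as ``a straightforward but useful interpretation of the flow augmenting path algorithm'' and left to the reader --- and your verification (compatibility via the two augmenting-path inequalities, conservation at internal vertices via the four-case orientation check on $a_{i-1},a_i$, and the value computation at $y$ via $a_1$) is exactly the standard argument the paper implicitly invokes, written out completely and without gaps.
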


The following result is substantially a technical rephrase of the Flow Decomposition Theorem as it is presented in Ahuja et al. (1993, Chapter 3). 
\begin{theorem}
\label{decomposition}
Let $N=(K_V,c)\in \mathscr{N}$, $y,z\in V$ be distinct and $f\in \mathcal{F}(N,y,z)$ having value $m\in \mathbb{N}_0$. Then there exist a sequence $\bm{\gamma}=(\gamma_j)_{j\in[m]}$ of $m$ paths from $y$ to $z$ in $N$, $k\in \mathbb{N}_0$  and a sequence $\bm{w}=(w_j)_{j\in [k]}$ of $k$ cycles in $N$ such that
\begin{equation}\label{dec}
f=\sum_{j\in[m]}\chi_{\gamma_j}+\sum_{j\in [k]}\chi_{w_j}.
\end{equation}
A couple $(\bm{\gamma},\bm{w})$ satisfying \eqref{dec} is called a decomposition of $f$.
For every decomposition $(\bm{\gamma},\bm{w})$ of $f$, we have
$\bm{\gamma}\in \mathcal{S}_{yz}^{N,m}$.
\end{theorem}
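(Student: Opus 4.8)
The plan is to prove the two assertions of Theorem~\ref{decomposition} separately: first the existence of a decomposition, and then the claim that in any decomposition the path-part $\bm{\gamma}$ is automatically arc-disjoint. For the existence part I would simply invoke the classical Flow Decomposition Theorem (Ahuja et al., 1993, Chapter 3) after a short translation into the present notation. Concretely, start from a flow $f$ of value $m$, view it as a nonnegative integral circulation-type object, and run the standard ``peeling'' argument: as long as there is an arc $a$ with $f(a)\geq 1$ exiting $y$, follow arcs of positive $f$-value forward; because of the conservation law at every intermediate vertex one can always continue, and since $V$ is finite one eventually either reaches $z$ (producing a path from $y$ to $z$) or revisits a vertex (producing a cycle). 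Subtract the corresponding $\chi$-function from $f$; this lowers the total $\sum_{a}f(a)$ by at least one and preserves integrality, compatibility and the conservation law, and it lowers $v(f)$ by exactly one in the path case and by zero in the cycle case. Iterating, after finitely many steps $f$ is reduced to the null flow, having extracted exactly $m$ paths from $y$ to $z$ (one per unit of value) and some number $k\geq 0$ of cycles. This yields \eqref{dec} with $\bm{\gamma}=(\gamma_j)_{j\in[m]}$ a sequence of $m$ paths from $y$ to $z$ in $N$ and $\bm{w}=(w_j)_{j\in[k]}$ a sequence of $k$ cycles in $N$; that each $\gamma_j$ and each $w_j$ lies in $N$ (i.e.\ only uses arcs of capacity $\geq 1$) is clear since they only use arcs on which $f$, hence $c$, is at least $1$.

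For the second assertion, let $(\bm{\gamma},\bm{w})$ be \emph{any} decomposition of $f$ and fix an arc $a\in A$. Evaluating \eqref{dec} at $a$ and using \eqref{chi-path} and \eqref{chi-cycle}, both of which are $\{0,1\}$-valued, we get
\[
f(a)=\sum_{j\in[m]}\chi_{\gamma_j}(a)+\sum_{j\in[k]}\chi_{w_j}(a)=|\{j\in[m]:a\in A(\gamma_j)\}|+|\{j\in[k]:a\in A(w_j)\}|.
\]
Since the second summand is nonnegative, $|\{j\in[m]:a\in A(\gamma_j)\}|\le f(a)\le c(a)$, where the last inequality is the compatibility condition \eqref{compatibility} for $f$. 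As $a$ was arbitrary, this is exactly the arc-disjointness condition \eqref{ind-flow-eq}, so $\bm{\gamma}\in\mathcal{S}_{yz}^{N,m}$. (One also recovers from the same identity that $f_{\bm{\gamma}}\le f$ pointwise, with $v(f_{\bm{\gamma}})=m=v(f)$, consistently with Proposition~\ref{ind-flow}, though this is not needed here.)

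The main obstacle is entirely in the existence part, and it is the bookkeeping needed to make the peeling argument rigorous: one must check that the residual function stays a genuine flow from $y$ to $z$ after each subtraction (compatibility is immediate since $\chi$-functions are $\{0,1\}$-valued and are only subtracted where $f\ge 1$; the conservation law at intermediate vertices is preserved because $\chi_{\gamma_j}$ and $\chi_{w_j}$ themselves satisfy it), that the value drops by exactly $1$ when a $y$--$z$ path is removed and is unchanged when a cycle is removed, and that the process terminates — which follows because $\sum_{a\in A}f(a)\in\mathbb{N}_0$ strictly decreases at every step. Since the statement explicitly says it is ``substantially a technical rephrase'' of a textbook theorem, I would keep this part brief, cite Ahuja et al.\ (1993) for the extraction procedure, and devote the written proof mostly to the short, self-contained argument of the second paragraph, which is the part actually tailored to the paper's framework.
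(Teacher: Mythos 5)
Your proposal is correct and follows essentially the same route as the paper: existence is delegated to Ahuja et al.\ (1993, Theorem 3.5), and arc-disjointness of $\bm{\gamma}$ is obtained from evaluating \eqref{dec} at an arbitrary arc $a$, using nonnegativity of the cycle functions together with the compatibility condition $f(a)\le c(a)$. The paper phrases this last step as a proof by contradiction, but the underlying computation is identical to your direct inequality.
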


\begin{proof} Except for the final statement, everything comes from Ahuja et al. (1993, Theorem 3.5). We need only to show that $(\gamma_j)_{j\in[m]}\in \mathcal{S}_{yz}^{N,m}$. Assume then, by contradiction, that there exists $a\in A$ such that
$|\{j\in [m]: a\in A(\gamma_{j})\}|>c(a)$. Then, by \eqref{dec} and recalling that the cycle functions are non-negative, we deduce
$$f(a)=\sum_{j\in[m]}\chi_{\gamma_j}(a)+\sum_{j\in[k]}\chi_{w_j}(a)=|\{j\in [m]: a\in A(\gamma_{j})\}|+\sum_{j\in[k]}\chi_{w_j}(a)>c(a),$$ a contradiction.
\end{proof}

\begin{example}\label{example2-text}{\rm 
As an illustration of Theorem \ref{decomposition}, consider the network $N$ and the flow $f$ from $y$ to $z$ in $N$ described in Figure \ref{example2}.
\begin{figure}[t]
\begin{center}
\begin{tikzpicture}
\begin{scope}
    \node[style={circle,thick,draw,fill=white}] (y) at (0,0) {$y$};
    \node[style={circle,thick,draw,fill=lightgray}] (a) at (2,1) {$v$};
    \node[style={circle,thick,draw,fill=lightgray}] (c) at (4,1) {$x$};
    \node[style={circle,thick,draw,fill=lightgray}] (b) at (3,-1){$u$};
    \node[style={circle,thick,draw,fill=white}] (z) at (6,0) {$z$} ;
\end{scope}
\begin{scope}[>=stealth, every edge/.style={thick,draw}]
    \path [->] (y) edge node[pos=0.5,anchor=south]  {$1,1$} (a);
    \path [->] (y) edge node[pos=0.5,anchor=north]  {$1,1$} (b);
    \path [->] (a) edge node[pos=0.5,anchor=south]  {$2,2$} (c);
    \path [->] (b) edge node[pos=0.5,anchor=east]  {$1,1$} (a);
		\path [->] (c) edge node[pos=0.5,anchor=west]  {$1,1$} (b);
		\path [->] (b) edge node[pos=0.5,anchor=north]  {$1,1$} (z);
		\path [->] (c) edge node[pos=0.5,anchor=south]  {$1,1$} (z);
    
\end{scope}
\end{tikzpicture}
\end{center}
\caption{The Flow Decomposition Theorem}
\label{example2}
\end{figure}
Note that the $v(f)=2$. A simple check shows that we have
\[
f=\chi_{yvxz}+\chi_{yuz}+\chi_{vxuv},
\]
where $(yvxz,yuz)$ is a sequence of 2 arc-disjoint paths from $y$ to $z$ in $N$ and $vxuv$ is a cycle in $N$.
Moreover we also have
\[
f=\chi_{yvxuz}+\chi_{yuvxz},
\]
where $(yvxuz,yuvxz)$ is a sequence of 2 arc-disjoint paths from $y$ to $z$ in $N$ and no cycle is involved.
In other words, $((yvxz,yuz),(vxuv))$ and $((yvxuz,yuvxz),())$ are two decompositions of $f$. That confirms the well-known fact that, in general, a flow can admit diverse decompositions. In particular, some involving cycles and some not.}
\end{example}

By Theorem \ref{decomposition}  we deduce that if there exists a flow of value $m$, then there also exists a flow of the same value of the type $f_{\bm{\gamma}}$, where $\bm{\gamma}\in \mathcal{S}_{yz}^{N,m}$. Such a $\bm{\gamma}$ can be obtained by considering any decomposition $(\bm{\gamma}^*,\bm{w}^*)$ of an arbitrarily chosen $m$-valued flow $f^*$ and setting $\bm{\gamma}=\bm{\gamma}^*$.

By Theorem \ref{decomposition} we can also better comment and comprehend the issue raised in Section \ref{intro-lambda}. Let us consider $\bm{\gamma}=(\gamma_{j})_{j\in[m]}\in \mathcal{S}^{N,m}_{yz}$, where $m\in \mathbb{N}$ and $m<n\leq  \varphi^{N}_{yz}$ (so that $\bm{\gamma}$ has not maximum length). Then $f_{\bm{\gamma}}$ is not a maximum flow. Applying the flow augmenting path algorithm $n-m$ times  using the augmenting paths $\sigma_1,\dots,\sigma_{n-m}$,
we find the maximum flow $\hat{f}=\sum_{j\in[m]}\chi_{\gamma_j}+\sum_{j\in [n-m]}\chi_{\sigma_j}$. Recall that the $\sigma_j$ are not paths but generalized paths. Now, by Theorem \ref{decomposition}, we have that there exist $\bm{\mu}=(\mu_{j})_{j\in[n]}\in \mathcal{S}^{N,n}_{yz}$ and a sequence $\bm{w}=(w_j)_{j\in [k]}$ of $k\in \mathbb{N}_0$ cycles in $N$ such that $\hat{f}=\sum_{j\in[n]}\chi_{\mu_j}+\sum_{j\in [k]}\chi_{w_j}$. The sequence $\bm{\mu}$ does not contain, in general, the original sequence $\bm{\gamma}$ as a subsequence and there is no immediate way to get one from the other.

Finally, Theorem \ref{decomposition} also allows to naturally associate with every flow a set of sequences of arc-disjoint paths in the sense of the following definition.

\begin{definition}\label{Sf-def}
 Let $N=(K_V,c)\in \mathscr{N}$, $y,z\in V$ be distinct and $f\in \mathcal{F}(N,y,z)$ with $v(f)=m$. We set
\begin{equation}\label{Sf}
\mathcal{S}^N_{yz}(f):=\big\{\bm{\gamma}\in \mathcal{S}^N_{yz}:\exists k\in\mathbb{N}_0\mbox{ and }\bm{w}\in (C^N)^k\mbox{ such that $(\bm{\gamma},\bm{w})$ is a decomposition of $f$}\big\},
\end{equation}
and we call $\mathcal{S}^N_{yz}(f)$ the set of sequences of arc-disjoint paths associated with $f.$
We also set
\begin{equation}\label{T}
\mathcal{T}^{N,m}_{yz}:=\bigcup_{\substack{{f\in \mathcal{F}(N,y,z)}\\{v(f)=m}}}\mathcal{S}^N_{yz}(f)\qquad\hbox{and}\qquad  \mathcal{T}^{N}_{yz}:=\mathcal{T}^{N,\varphi^N_{yz}}_{yz},
\end{equation}
and we call $\mathcal{T}^{N,m}_{yz}$ the set of sequences of  arc-disjoint paths for $m$-valued flows and $\mathcal{T}^{N}_{yz}$ the set of  sequences of arc-disjoint paths for maximum flows.
\end{definition}

Note that, by Theorem \ref{decomposition}, if $f\in \mathcal{F}(N,y,z)$ with $v(f)=m$, then $\varnothing\neq\mathcal{S}^N_{yz}(f)\subseteq\mathcal{S}_{yz}^{N,m}$.

We are now in position to clarify the link between sequences of arc-disjoint paths and flows in a network.
Proposition \ref{dec-x} below significantly extends \eqref{lex:disjointpaths} showing that, whatever is $m$, the sequences of $m$ arc-disjoint paths are exactly those associated with the flows of value $m$, through the Flow Decomposition Theorem. Moreover it shows that  the set of sequences of  arc-disjoint paths for maximum flows coincides with the set of maximum sequences of arc-disjoint paths.

\begin{proposition}\label{dec-x}
Let $N=(K_V,c)\in \mathscr{N}$, $y,z\in V$ be distinct  and $m\in\mathbb{N}_0$. Then the following facts hold:
\begin{itemize}
\item[$(i)$] $\mathcal{S}_{yz}^{N,m}=\mathcal{T}^{N,m}_{yz}$;
 \item[$(ii)$] $\mathcal{M}_{yz}^{N}=\mathcal{T}^{N}_{yz}$.
\end{itemize}
\end{proposition}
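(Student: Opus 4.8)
The plan is to prove the two set equalities by mutual inclusion, leaning on Proposition \ref{ind-flow} for one direction and Theorem \ref{decomposition} for the other, and then to deduce $(ii)$ from $(i)$ essentially by counting.

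First I would establish $(i)$. For the inclusion $\mathcal{S}^{N,m}_{yz}\subseteq\mathcal{T}^{N,m}_{yz}$, take $\bm{\gamma}\in\mathcal{S}^{N,m}_{yz}$ and consider the associated flow $f_{\bm{\gamma}}$ of Definition \ref{flow-ass-path}. By Proposition \ref{ind-flow} we have $f_{\bm{\gamma}}\in\mathcal{F}(N,y,z)$, $v(f_{\bm{\gamma}})=m$, and $f_{\bm{\gamma}}=\sum_{j\in[m]}\chi_{\gamma_j}$. Hence $(\bm{\gamma},(\,))$ is a decomposition of $f_{\bm{\gamma}}$ (with $k=0$), so $\bm{\gamma}\in\mathcal{S}^N_{yz}(f_{\bm{\gamma}})\subseteq\mathcal{T}^{N,m}_{yz}$. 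Conversely, for $\mathcal{T}^{N,m}_{yz}\subseteq\mathcal{S}^{N,m}_{yz}$, take $\bm{\gamma}\in\mathcal{T}^{N,m}_{yz}$: by definition there is $f\in\mathcal{F}(N,y,z)$ with $v(f)=m$ and a decomposition $(\bm{\gamma},\bm{w})$ of $f$; the final statement of Theorem \ref{decomposition} gives exactly $\bm{\gamma}\in\mathcal{S}^{N,m}_{yz}$. (Indeed, the remark right after Definition \ref{Sf-def} already records $\mathcal{S}^N_{yz}(f)\subseteq\mathcal{S}^{N,m}_{yz}$, so this inclusion is immediate.) This settles $(i)$.

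For $(ii)$, specialize $(i)$ to $m=\varphi^N_{yz}$: by \eqref{lex:disjointpaths}, $\lambda^N_{yz}=\varphi^N_{yz}$, so $\mathcal{M}^N_{yz}=\mathcal{S}^{N,\varphi^N_{yz}}_{yz}=\mathcal{T}^{N,\varphi^N_{yz}}_{yz}=\mathcal{T}^N_{yz}$, where the first equality is the definition of $\mathcal{M}^N_{yz}$, the middle one is $(i)$ with $m=\varphi^N_{yz}$, and the last is the definition of $\mathcal{T}^N_{yz}$ in \eqref{T}. One small point to verify is that the flows $f$ entering the union defining $\mathcal{T}^N_{yz}=\mathcal{T}^{N,\varphi^N_{yz}}_{yz}$ are precisely the maximum flows, i.e.\ those with $v(f)=\varphi^N_{yz}$, which is immediate from the definition of $\varphi^N_{yz}$ as the maximum value; this is why the name ``sequences of arc-disjoint paths for maximum flows'' is consistent.

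I do not expect a genuine obstacle here: the content has been front-loaded into Proposition \ref{ind-flow} and Theorem \ref{decomposition}. The only place that needs a little care is making sure the case $m=0$ is not vacuous or mishandled — when $m=0$ one has $\bm{\gamma}=(\,)$, $f_{\bm{\gamma}}=f_0$, and both $\mathcal{S}^{N,0}_{yz}$ and $\mathcal{T}^{N,0}_{yz}$ reduce to $\{(\,)\}$ (using that $(\,)$ is a decomposition of the null flow), so the argument goes through uniformly. If $P^N_{yz}=\varnothing$ the same degenerate bookkeeping applies with $\varphi^N_{yz}=0$.
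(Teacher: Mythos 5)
Your proposal is correct and follows essentially the same route as the paper: both inclusions in $(i)$ are obtained exactly as in the paper's proof (the trivial decomposition $(\bm{\gamma},(\,))$ of $f_{\bm{\gamma}}$ via Proposition \ref{ind-flow} for one direction, the observation $\mathcal{S}^N_{yz}(f)\subseteq\mathcal{S}^{N,m}_{yz}$ from Theorem \ref{decomposition} for the other), and $(ii)$ is deduced by specializing $(i)$ to $m=\varphi^N_{yz}$. Your extra remarks on the $m=0$ case and on which flows enter the union defining $\mathcal{T}^N_{yz}$ are harmless additional bookkeeping that the paper leaves implicit.
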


\begin{proof} $(i)$ Let $f\in \mathcal{F}(N,y,z)$ with $v(f)=m$. We have already observed that $\mathcal{S}^N_{yz}(f)\subseteq \mathcal{S}_{yz}^{N,m}.$ Thus, by \eqref{T}, we get
$\mathcal{T}^{N,m}_{yz}\subseteq \mathcal{S}_{yz}^{N,m}.$ Let now $\bm{\gamma}^*=(\gamma_{j}^*)_{j\in[m]}\in \mathcal{S}_{yz}^{N,m}$ and consider the flow  $f_{\bm{\gamma}^*}$ associated with $\bm{\gamma}^*$. By Proposition \ref{ind-flow}, we have that $v(f_{\bm{\gamma}^*})=m$ and $f_{\bm{\gamma}^*}=\sum_{j\in[m]} \chi_{\gamma_{j}^*}$, which means that 
we have a decomposition of $f_{\bm{\gamma}^*}$  given by $(\bm{\gamma}^*,())$ with no cycle involved.
Clearly, by \eqref{Sf} and \eqref{T}, we get $\bm{\gamma}^*\in \mathcal{S}^N_{yz}(f_{\bm{\gamma}^*})\subseteq \mathcal{T}^{N,m}_{yz}.$

$(ii)$ Apply $(i)$ to $m=\varphi^N_{yz}.$
\end{proof}

\section{En route for the proof of the main theorem}\label{dimo}

In this section we present some technical results to which we will appeal for the proof of the main theorem (Theorem \ref{main}).  To start with, given a maximum flow $f$, we show an interesting inequality between $f(x)$ and $\lambda^N_{yz}(x)$.

\begin{lemma}\label{lemma-nuovo}
Let $N=(K_V,c)\in \mathscr{N}$, $x,y,z\in V$ with $y,z$ distinct and $f\in \mathcal{F}(N,y,z)$ with $v(f)=m\in \mathbb{N}_0$. Then the following facts hold:
\begin{itemize}
\item[$(i)$] for every $\bm{\gamma}\in \mathcal{S}^N_{yz}(f)$, we have $f(x)\geq f_{\bm{\gamma}}(x)$;
\item[$(ii)$] if $f\in \mathcal{M}(N,y,z)$, then $f(x)\geq \lambda^N_{yz}(x)$;
\item[$(iii)$] if $\bm{\gamma}\in \mathcal{M}^{N}_{yz}(x)$, then $f_{\bm{\gamma}}(x)=\lambda_{yz}^N(x)$.

\end{itemize}
\end{lemma}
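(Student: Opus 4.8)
The plan is to prove the three items in the stated order, since $(ii)$ rests on $(i)$ while $(iii)$ is essentially the definition of $\mathcal{M}^N_{yz}(x)$ read through \eqref{fgammax}.

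For $(i)$, I would fix $\bm{\gamma}=(\gamma_j)_{j\in[m]}\in\mathcal{S}^N_{yz}(f)$ and use Definition \ref{Sf-def} to obtain $k\in\mathbb{N}_0$ and a sequence of cycles $\bm{w}=(w_j)_{j\in[k]}$ in $N$ with $f=\sum_{j\in[m]}\chi_{\gamma_j}+\sum_{j\in[k]}\chi_{w_j}$. By Proposition \ref{ind-flow} the first summand is exactly $f_{\bm{\gamma}}$, so $f=f_{\bm{\gamma}}+\sum_{j\in[k]}\chi_{w_j}$; since cycle functions are non-negative (Definition \ref{path-function}), this gives $f(a)\ge f_{\bm{\gamma}}(a)$ for every $a\in A$. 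I would then split according to Definition \ref{flow-per-x}. If $x\notin\{y,z\}$, summing the arcwise inequality over $a\in A_x^+$ yields $f(x)=\sum_{a\in A_x^+}f(a)\ge\sum_{a\in A_x^+}f_{\bm{\gamma}}(a)=f_{\bm{\gamma}}(x)$. If $x\in\{y,z\}$, then $\bm{\gamma}\in\mathcal{S}^{N,m}_{yz}$, so $v(f_{\bm{\gamma}})=m$ by Proposition \ref{ind-flow}, and hence $f(x)=v(f)=m=v(f_{\bm{\gamma}})=f_{\bm{\gamma}}(x)$, so here the inequality is in fact an equality.

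For $(ii)$, I would assume $f\in\mathcal{M}(N,y,z)$, so $v(f)=\varphi^N_{yz}$. By Theorem \ref{decomposition} the set $\mathcal{S}^N_{yz}(f)$ is non-empty, and by \eqref{T} together with Proposition \ref{dec-x}$(ii)$ we have $\mathcal{S}^N_{yz}(f)\subseteq\mathcal{T}^{N,\varphi^N_{yz}}_{yz}=\mathcal{T}^N_{yz}=\mathcal{M}^N_{yz}$. Picking any $\bm{\gamma}\in\mathcal{S}^N_{yz}(f)$, item $(i)$ and \eqref{fgammax} give $f(x)\ge f_{\bm{\gamma}}(x)=l_x(\bm{\gamma})$, while $\bm{\gamma}\in\mathcal{M}^N_{yz}$ forces $l_x(\bm{\gamma})\ge\lambda^N_{yz}(x)$ by the definition of $\lambda^N_{yz}(x)$; chaining the inequalities gives $f(x)\ge\lambda^N_{yz}(x)$. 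For $(iii)$, if $\bm{\gamma}\in\mathcal{M}^N_{yz}(x)=\argmin_{\bm{\gamma}'\in\mathcal{M}^N_{yz}}l_x(\bm{\gamma}')$, then $l_x(\bm{\gamma})=\lambda^N_{yz}(x)$ by definition of $\mathcal{M}^N_{yz}(x)$, and \eqref{fgammax} upgrades this to $f_{\bm{\gamma}}(x)=l_x(\bm{\gamma})=\lambda^N_{yz}(x)$.

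The one genuinely delicate point, which is the heart of $(i)$ and hence of $(ii)$, is that in any flow decomposition the cyclic part is non-negative, so discarding it can only lower the amount of flow crossing an internal vertex $x$; the small subtlety is that for a terminal vertex $x\in\{y,z\}$ the value of the flow is preserved, so there one obtains equality rather than a possible strict drop. Everything else is routine bookkeeping with the definitions of $f(x)$, $f_{\bm{\gamma}}$, $\lambda^N_{yz}(x)$, and $\mathcal{M}^N_{yz}(x)$, plus the already established fact (Proposition \ref{dec-x}) that the sequences of arc-disjoint paths coming from maximum flows are precisely the maximum sequences of arc-disjoint paths.
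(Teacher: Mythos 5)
Your proposal is correct and follows essentially the same route as the paper's own proof: decompose $f$ as $f_{\bm{\gamma}}$ plus a non-negative sum of cycle functions for $(i)$, combine $(i)$ with \eqref{fgammax} and Proposition \ref{dec-x}$(ii)$ for $(ii)$, and read $(iii)$ directly off the definition of $\mathcal{M}^N_{yz}(x)$ via \eqref{fgammax}. The only (harmless) difference is that you make explicit the equality $f(x)=f_{\bm{\gamma}}(x)$ in the case $x\in\{y,z\}$, which the paper also notes in its opening line of the proof of $(i)$.
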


\begin{proof} $(i)$ If $x\in \{y,z\}$, then $f(x)=m=f_{\bm{\gamma}}(x).$ Assume next $x\notin \{y,z\}$.
Let $\bm{\gamma}\in \mathcal{S}^N_{yz}(f)$. By Definition  \ref{Sf-def}, there exists a  sequence $(w_j)_{j\in [k]}$ of cycles in $N$ such that
 $f=f_{\bm{\gamma}}+\sum_{j\in [k]}\chi_{w_j}$.
Thus, by Definition \ref{flow-per-x} and recalling that the cycle functions assume only non-negative value, we have
\[
f(x)=\sum_{a\in A_x^+} f(a)=\sum_{a\in A_x^+} f_{\bm{\gamma}}(a)+\sum_{a\in A_x^+} \left(\sum_{j\in [k]}\chi_{w_j}(a)\right)\geq f_{\bm{\gamma}}(x).
\]

$(ii)$ Assume that $f\in \mathcal{M}(N,y,z)$ and pick $\bm{\gamma}\in \mathcal{S}^N_{yz}(f)$.
By $(i)$ and by equality \eqref{fgammax}, we then have that $f(x)\geq f_{\bm{\gamma}}(x)=l_x(\bm{\gamma})$. Since, by Proposition \ref{dec-x}\,$(ii)$, we have $\mathcal{S}^N_{yz}(f)\subseteq \mathcal{M}^N_{yz}$ then we also have $$f(x)\geq \min_{\bm{\gamma}\in\mathcal{M}_{yz}^{N}}l_x(\bm{\gamma})= \lambda^N_{yz}(x).$$

$(iii)$ Let $\bm{\gamma}\in \mathcal{M}^{N}_{yz}(x)$. Then, by \eqref{fgammax}, we have that $f_{\bm{\gamma}}(x)=l_x(\bm{\gamma})=\lambda_{yz}^N(x)$.
\end{proof}

The next lemma establishes a natural bound for $\lambda^{N}_{yz}(x)$ in terms of the outdegree and the indegree of $x$.

\begin{lemma}\label{ind-outd}
Let $N=(K_V,c)\in\mathscr{N}$ and $x,y,z\in V$ be distinct. Then $\lambda^{N}_{yz}(x)\le \min\{c(x), c(V\setminus\{x\})\}.$
\end{lemma}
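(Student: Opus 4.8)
The plan is to bound $\lambda^{N}_{yz}(x)$ by producing a single concrete element $\bm{\gamma}$ of $\mathcal{M}^N_{yz}$ and counting how many of its components can possibly pass through $x$, since $\lambda^N_{yz}(x)=\min_{\bm{\gamma}\in\mathcal{M}^N_{yz}}l_x(\bm{\gamma})\le l_x(\bm{\gamma})$ for any such $\bm{\gamma}$. Actually, the cleaner route is via flows: by \eqref{lex:disjointpaths} together with Proposition~\ref{dec-x}$(ii)$, every maximum sequence of arc-disjoint paths arises as $\bm{\gamma}$ in a decomposition of some maximum flow $f$, and for such $\bm{\gamma}$ we have $l_x(\bm{\gamma})=f_{\bm{\gamma}}(x)$ by \eqref{fgammax}. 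So it suffices to exhibit \emph{one} maximum flow $f^*$ for which $f^*(x)\le\min\{c(x),c(V\setminus\{x\})\}$, pick any $\bm{\gamma}\in\mathcal{S}^N_{yz}(f^*)$, and note that by Lemma~\ref{lemma-nuovo}$(i)$ (or directly) $f_{\bm{\gamma}}(x)\le f^*(x)$, whence $\lambda^N_{yz}(x)\le l_x(\bm{\gamma})=f_{\bm{\gamma}}(x)\le f^*(x)\le\min\{c(x),c(V\setminus\{x\})\}$.

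First I would dispose of degenerate cases: if $x\in\{y,z\}$ the statement is vacuous since $x,y,z$ are assumed distinct; and if $P^N_{yz}=\varnothing$ then $\lambda^N_{yz}(x)=0$ and there is nothing to prove. So assume $x\notin\{y,z\}$ and $\lambda^N_{yz}=\varphi^N_{yz}=m\ge 1$. Now the key inequality: for \emph{any} maximum flow $f$, we have $f(x)=\sum_{a\in A_x^+}f(a)\le\sum_{a\in A_x^+}c(a)=c(x)$ directly from the compatibility condition \eqref{compatibility} and the definition $c(x)=\sum_{a\in A_x^+}c(a)$. Likewise, since $x\notin\{y,z\}$, the conservation law \eqref{conservation} gives $f(x)=\sum_{a\in A_x^+}f(a)=\sum_{a\in A_x^-}f(a)\le\sum_{a\in A_x^-}c(a)=c(V\setminus\{x\})$ (the indegree). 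Hence $f(x)\le\min\{c(x),c(V\setminus\{x\})\}$ for every maximum flow $f$, in particular for at least one.

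Finally, combine: choose any $f\in\mathcal{M}(N,y,z)$ and any $\bm{\gamma}\in\mathcal{S}^N_{yz}(f)$, which is nonempty by Theorem~\ref{decomposition}. By Proposition~\ref{dec-x}$(ii)$, $\bm{\gamma}\in\mathcal{M}^N_{yz}$, so $\lambda^N_{yz}(x)\le l_x(\bm{\gamma})$. By \eqref{fgammax}, $l_x(\bm{\gamma})=f_{\bm{\gamma}}(x)$, and by Lemma~\ref{lemma-nuovo}$(i)$, $f_{\bm{\gamma}}(x)\le f(x)$. Stringing these together with the bound of the previous paragraph yields $\lambda^N_{yz}(x)\le\min\{c(x),c(V\setminus\{x\})\}$, as desired. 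There is no real obstacle here — the only thing to watch is making sure the conservation-law step genuinely needs $x\notin\{y,z\}$, which is exactly why the degenerate cases were peeled off first; everything else is a one-line invocation of results already established in Sections~\ref{proof-main} and~\ref{dimo}.
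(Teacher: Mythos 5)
Your proposal is correct and rests on the same core idea as the paper's proof: reduce the bound to a flow value at $x$ and then apply compatibility \eqref{compatibility} for the outdegree bound and the conservation law \eqref{conservation} (valid since $x\notin\{y,z\}$) for the indegree bound. The paper gets there more directly by taking $\bm{\gamma}\in\mathcal{M}^{N}_{yz}(x)$ and invoking Lemma \ref{lemma-nuovo}$(iii)$ to obtain the exact equality $\lambda^{N}_{yz}(x)=f_{\bm{\gamma}}(x)$, whereas you route through an arbitrary maximum flow, Theorem \ref{decomposition}, Proposition \ref{dec-x}$(ii)$ and Lemma \ref{lemma-nuovo}$(i)$ — a valid but slightly longer chain of inequalities.
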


\begin{proof}
Consider $\bm{\gamma}  \in \mathcal{M}^{N}_{yz}(x)$. By Lemma \ref{lemma-nuovo}\,$(iii)$ and Definition \ref{flow-per-x}, we have
\[
\lambda^{N}_{yz}(x)=f_{\bm{\gamma}}(x)=\sum_{a\in  A_x^+}f_{\bm{\gamma}}(a)\le \sum_{a\in  A_x^+}c(a)=c(x)
\]
and also
\[
\lambda^{N}_{yz}(x)=f_{\bm{\gamma}}(x)=\sum_{a\in  A_x^-}f_{\bm{\gamma}}(a)\le \sum_{a\in  A_x^-} c(a)=c(V\setminus\{x\}).
\]
\end{proof}

In the following two results we explain how some crucial objects of our research behave with respect to a decrease of capacity in the network.
\begin{lemma}
\label{lex:lemmanegpaper}
Let $N =(K_V,c)\in\mathscr{N}$, $N'=(K_V,c')\in\mathscr{N}$ and $y,z\in V$ be distinct. Assume that $c'\le c$. Then, the following facts hold true:
\begin{itemize}
\item[$(i)$] $\mathcal{S}^{N'}_{yz}\subseteq\mathcal{S}^{N}_{yz}$. In particular, $\mathcal{M}^{N'}_{yz}\subseteq\mathcal{S}^{N}_{yz}$;
\item[$(ii)$] $\varphi^{N'}_{yz}(x)$ can be greater than $\varphi^{N}_{yz}(x).$
\end{itemize}
\end{lemma}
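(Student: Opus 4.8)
The statement splits into an elementary inclusion and a non-monotonicity remark, which I would treat separately. For part $(i)$, the plan is simply to unwind Definition~\ref{arc-disj-def}. Given $\bm{\gamma}=(\gamma_j)_{j\in[m]}\in\mathcal{S}^{N'}_{yz}$, I would first observe that each $\gamma_j$ is a path in $N'$, so $c'(a)\ge 1$ for all $a\in A(\gamma_j)$, and since $c'\le c$ this forces $c(a)\ge 1$; hence $\gamma_j\in P^N_{yz}$. Then, for every $a\in A$, arc-disjointness of $\bm{\gamma}$ in $N'$ together with $c'\le c$ yields $|\{j\in[m]:a\in A(\gamma_j)\}|\le c'(a)\le c(a)$, which is arc-disjointness in $N$. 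Thus $\bm{\gamma}\in\mathcal{S}^{N,m}_{yz}\subseteq\mathcal{S}^N_{yz}$, giving $\mathcal{S}^{N'}_{yz}\subseteq\mathcal{S}^N_{yz}$; the ``in particular'' clause is immediate because $\mathcal{M}^{N'}_{yz}=\mathcal{S}^{N',\lambda^{N'}_{yz}}_{yz}\subseteq\mathcal{S}^{N'}_{yz}$.

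For part $(ii)$ I would exhibit a counterexample, the guiding idea being that the capacity one lowers must sit on an arc \emph{not} incident to $x$ — decreasing an arc incident to $x$ affects $N$ and $N_x$ in the same way and so cannot flip the sign of $\varphi^N_{yz}-\varphi^{N_x}_{yz}$. Concretely, I would take $V=\{y,u,x,z\}$ and let $N$ assign capacity $1$ to each of $(y,x)$, $(y,u)$, $(u,x)$, $(u,z)$, capacity $2$ to $(x,z)$, and capacity $0$ to every other arc. Since the arcs leaving $y$ have total capacity $2$ and one unit can be sent along $yuz$ and one along $yxz$, the cut isolating $y$ certifies $\varphi^N_{yz}=2$; in $N_x$ only $(y,u)$ and $(u,z)$ survive with capacity $1$, so $\varphi^{N_x}_{yz}=1$ and hence $\varphi^N_{yz}(x)=1$. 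Now let $N'=(K_V,c')$ be $N$ with the capacity of $(u,z)$ lowered to $0$, so $c'\le c$. In $N'$ one unit can still travel along $yxz$ and a second along $yuxz$ — legitimate because $(x,z)$ has capacity $2$ — so $\varphi^{N'}_{yz}=2$; but $N'_x$ has no positive-capacity arc reaching $z$, so $\varphi^{N'_x}_{yz}=0$ and $\varphi^{N'}_{yz}(x)=2>1=\varphi^N_{yz}(x)$.

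Part $(i)$ is entirely routine; the only real obstacle is finding the example in part $(ii)$, where one has to resist lowering capacity on an arc touching $x$ and instead arrange that cutting an ``auxiliary'' $y$--$z$ route forces all the flow through $x$ without shrinking the overall throughput. The intuition I would highlight is that, as long as the route $yuz$ exists, $x$ need only carry the single unit entering via $(y,x)$, so its vitality is $1$; once that route is removed, both units must pass through $x$, raising its vitality to $2$, while the total flow is unchanged precisely because $(x,z)$ carries slack capacity. This simultaneously makes the point that the monotonicity of $\varphi^N_{yz}(X)$ in $X$ recorded in Proposition~\ref{monot-phi} has no analogue with respect to the capacity function.
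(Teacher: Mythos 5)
Your proof is correct and follows essentially the same route as the paper: part $(i)$ is the same direct unwinding of the arc-disjointness condition via $c'\le c$, and part $(ii)$ is settled by a counterexample of the same design as the paper's (deleting a bypass arc not incident to $x$ so that all flow is rerouted through $x$ without lowering the total throughput), only on a slightly smaller network. Your example's arithmetic checks out ($\varphi^{N}_{yz}(x)=1$ versus $\varphi^{N'}_{yz}(x)=2$), so nothing further is needed.
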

\begin{proof}
$(i)$ Let $\bm{\gamma}=(\gamma_j)_{j\in [m]}\in \mathcal{S}^{N'}_{yz}$, where $m\in\mathbb{N}_0$. Then, for every $a\in A$, we have
$$|\{j\in [m]: a\in A(\gamma_{j})\}|\le c'(a)\le c(a)$$
  and thus $\bm{\gamma}\in \mathcal{S}^{N}_{yz}$. Recall now that, by definition,  $\mathcal{M}^{N'}_{yz}\subseteq\mathcal{S}^{N'}_{yz}.$

$(ii)$ Consider the networks $N$ and $N'$ in Figures \ref{fig:N} and \ref{fig:Np} and denote by $c$ and $c'$ their capacities. 
Of course, we have that $c'\leq c$. It is easily checked that $\varphi^{N'}_{yz}(x)=1>\varphi^{N}_{yz}(x)=0$.
\begin{figure}[t]
\begin{minipage}{.5\textwidth}
\centering
\begin{tikzpicture}
\begin{scope}
    \node[style={circle,thick,draw,fill=white}]  (y) at (0,0) {$y$};
    \node[style={circle,thick,draw,fill=lightgray}]  (a) at (1,1) {$v$};
    \node[style={circle,thick,draw,fill=lightgray}]  (b) at (2,-1) {$u$};
    \node[style={circle,thick,draw,fill=lightgray}]  (c) at (3,1) {$x$};
    \node[style={circle,thick,draw,fill=white}]  (z) at (4,0) {$z$};
\end{scope}
\begin{scope}[>=stealth, every edge/.style={thick,draw}]
    \path [->] (y) edge node[pos=0.3,anchor=south] {1} (a);
    \path [->] (y) edge node[pos=0.5,anchor=north] {1} (b);
    \path [->] (a) edge node[pos=0.5,anchor=south] {1} (c);
    \path [->] (a) edge node[pos=0.5,anchor=north] {1} (z);
		\path [->] (c) edge node[pos=0.7,anchor=south] {1} (z);
		\path [->] (b) edge node[pos=0.5,anchor=north] {1} (z);
\end{scope}
\end{tikzpicture}					
\caption{The network $N$}
\label{fig:N}
\end{minipage}
\begin{minipage}{.5\textwidth}
\centering
\begin{tikzpicture}
\begin{scope}
    \node[style={circle,thick,draw,fill=white}] (y) at (0,0) {$y$};
    \node[style={circle,thick,draw,fill=lightgray}] (a)  at (1,1) {$v$};
    \node[style={circle,thick,draw,fill=lightgray}] (b)  at (2,-1) {$u$};
    \node[style={circle,thick,draw,fill=lightgray}] (c)  at (3,1) {$x$};
    \node[style={circle,thick,draw,fill=white}]  (z) at (4,0) {$z$};
\end{scope}
\begin{scope}[>=stealth, every edge/.style={thick,draw}]
    \path [->] (y) edge node[pos=0.3,anchor=south] {1} (a);
    \path [->] (y) edge node[pos=0.5,anchor=north] {1} (b);
    \path [->] (a) edge node[pos=0.5,anchor=south] {1} (c);
		\path [->] (c) edge node[pos=0.7,anchor=south] {1} (z);
		\path [->] (b) edge node[pos=0.5,anchor=north] {1} (z);
\end{scope}
\end{tikzpicture}					
\caption{The network $N'$}
\label{fig:Np}
\end{minipage}
\end{figure}
\end{proof}

\begin{lemma}
\label{lemma1-2}
Let $N =(K_V,c)\in\mathscr{N}$, $N'=(K_V,c')\in\mathscr{N}$ and $y,z\in V$ be distinct. Assume that $c'\le c$. 
Then the following conditions are equivalent:
\begin{itemize}
\item[$(a)$] $\mathcal{M}^{N'}_{yz}\subseteq\mathcal{M}^{N}_{yz}$;
\item[$(b)$] $\varphi^{N'}_{yz} = \varphi^{N}_{yz}$;
\item[$(c)$] $\mathcal{M}(N',y,z)\subseteq \mathcal{M}(N,y,z)$.
\end{itemize}
\end{lemma}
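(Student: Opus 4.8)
The plan is to prove the three conditions equivalent by establishing the two equivalences $(a)\Leftrightarrow(b)$ and $(b)\Leftrightarrow(c)$, using the scalar equality $(b)$ as a pivot. The whole argument will reduce to three facts already available: under $c'\le c$ every flow of $N'$ is a flow of $N$ (inclusion \eqref{utile-inclusione}), every sequence of arc-disjoint paths of $N'$ is one of $N$ (Lemma \ref{lex:lemmanegpaper}$(i)$), and $\varphi^{N'}_{yz}\le\varphi^{N}_{yz}$ (inequality \eqref{utile-disu}), together with the identity $\varphi=\lambda$ from \eqref{lex:disjointpaths}. No appeal to the Flow Decomposition Theorem is needed.

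For $(a)\Leftrightarrow(b)$ I would fix an arbitrary $\bm{\gamma}\in\mathcal{M}^{N'}_{yz}$; this set is nonempty because the lengths of arc-disjoint sequences from $y$ to $z$ in $N'$ are bounded and $(\,)$ is one such sequence, so $l(\bm{\gamma})=\varphi^{N'}_{yz}$. If $(a)$ holds then $\bm{\gamma}\in\mathcal{M}^{N}_{yz}$, hence $\varphi^{N'}_{yz}=l(\bm{\gamma})=\varphi^{N}_{yz}$, which is $(b)$. Conversely, if $(b)$ holds, then by Lemma \ref{lex:lemmanegpaper}$(i)$ we have $\bm{\gamma}\in\mathcal{S}^{N}_{yz}$ with $l(\bm{\gamma})=\varphi^{N'}_{yz}=\varphi^{N}_{yz}=\lambda^{N}_{yz}$; thus $\bm{\gamma}$ has maximum length in $N$, i.e. $\bm{\gamma}\in\mathcal{M}^{N}_{yz}$, and since $\bm{\gamma}$ was an arbitrary element of $\mathcal{M}^{N'}_{yz}$ this gives $(a)$.

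For $(b)\Leftrightarrow(c)$ I would fix an arbitrary $f\in\mathcal{M}(N',y,z)$; this set is nonempty since the null flow lies in $\mathcal{F}(N',y,z)$ and the value is bounded, so $v(f)=\varphi^{N'}_{yz}$, and by \eqref{utile-inclusione} $f\in\mathcal{F}(N,y,z)$. If $(b)$ holds then $v(f)=\varphi^{N'}_{yz}=\varphi^{N}_{yz}$, so $f\in\mathcal{M}(N,y,z)$, which (as $f$ was arbitrary) yields $(c)$. Conversely, if $(c)$ holds then $f\in\mathcal{M}(N,y,z)$, so $\varphi^{N'}_{yz}=v(f)=\varphi^{N}_{yz}$, which is $(b)$. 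Combining the two equivalences finishes the proof. I do not anticipate a genuine obstacle here: the argument is a short chain of one-line implications, and the only point deserving a word of care is the nonemptiness of $\mathcal{M}^{N'}_{yz}$ and of $\mathcal{M}(N',y,z)$, which is exactly what legitimizes the ``pick an element'' steps; the essential idea is to route everything through the numerical condition $(b)$ rather than chasing decompositions of flows.
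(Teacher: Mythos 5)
Your proof is correct, and all the facts you invoke (\eqref{utile-inclusione}, \eqref{utile-disu}, Lemma \ref{lex:lemmanegpaper}$(i)$, and the identity $\varphi^{N}_{yz}=\lambda^{N}_{yz}$ from \eqref{lex:disjointpaths}) are available at this point in the paper; the nonemptiness remarks legitimizing the ``pick an element'' steps are also right. The paper organizes the argument as a single cycle $(a)\Rightarrow(b)\Rightarrow(c)\Rightarrow(a)$ rather than your two biconditionals pivoting on $(b)$, and the only substantive divergence is in how one gets back from the flow side to the path side: the paper's step $(c)\Rightarrow(a)$ takes $\bm{\gamma}\in\mathcal{M}^{N'}_{yz}$, forms the associated flow $f_{\bm{\gamma}}$ via Proposition \ref{ind-flow}, deduces $f_{\bm{\gamma}}\in\mathcal{M}(N',y,z)\subseteq\mathcal{M}(N,y,z)$, and reads off $l(\bm{\gamma})=\varphi^{N}_{yz}$, whereas your $(b)\Rightarrow(a)$ stays entirely at the level of sequences and concludes maximality in $N$ directly from $l(\bm{\gamma})=\varphi^{N}_{yz}=\lambda^{N}_{yz}$. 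Your decoupling keeps paths and flows in separate halves of the proof and avoids the conversion $\bm{\gamma}\mapsto f_{\bm{\gamma}}$ altogether, which is marginally more economical; the paper's cycle proves one fewer implication and illustrates the path-to-flow dictionary that is the running theme of Section \ref{proof-main}. Both are equally rigorous.
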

\begin{proof}
$(a)\Rightarrow (b)$ Assume that $\mathcal{M}^{N'}_{yz}\subseteq\mathcal{M}^{N}_{yz}$. Pick $\bm{\gamma}\in \mathcal{M}^{N'}_{yz}$.
Then $\varphi^{N'}_{yz}=l(\bm{\gamma})$ and also $\varphi^{N}_{yz}=l(\bm{\gamma})$, so that $\varphi^{N'}_{yz}=\varphi^{N}_{yz}.$

$(b)\Rightarrow (c)$ Assume that $\varphi^{N'}_{yz}=\varphi^{N}_{yz}.$ Let $f\in \mathcal{M}(N',y,z)$. Then, by \eqref{utile-inclusione},
$f\in \mathcal{F}(N,y,z)$ and  $v(f)=\varphi^{N'}_{yz}= \varphi^{N}_{yz}$. Thus, $f\in\mathcal{M}(N,y,z)$.

$(c)\Rightarrow (a)$ Assume that $\mathcal{M}(N',y,z)\subseteq \mathcal{M}(N,y,z)$. Let $\bm{\gamma}\in \mathcal{M}^{N'}_{yz}$. Then
$l(\bm{\gamma})=\varphi^{N'}_{yz}$ and, by Lemma \ref{lex:lemmanegpaper}$\,(i)$, $\bm{\gamma}\in \mathcal{S}^{N}_{yz}$. Consider the flow
$f_{\bm{\gamma}}$ associated with $\bm{\gamma}$, and recall that $v(f_{\bm{\gamma}})=l(\bm{\gamma})=\varphi^{N'}_{yz}$.
 Hence $f_{\bm{\gamma}}\in \mathcal{M}(N',y,z)$ and thus $f_{\bm{\gamma}}\in \mathcal{M}(N,y,z)$. Thus,
$l(\bm{\gamma})=\varphi^{N}_{yz}$, which gives $\bm{\gamma}\in \mathcal{M}^{N}_{yz}$.
\end{proof}

\section{Main theorem }\label{proofs}

We are finally ready to prove our main result.\footnote{Theorem \ref{main} was conjectured in Ghiggi (2018).}
\begin{theorem}\label{main}
Let $N =(K_V,c)\in\mathscr{N}$ and  $x,y,z\in V$  with $y,z$ distinct. Then $\lambda_{yz}^N(x)= \varphi^N_{yz}(x)$.
\end{theorem}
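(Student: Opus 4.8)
The plan is to prove the two inequalities $\lambda_{yz}^N(x)\le\varphi^N_{yz}(x)$ and $\lambda_{yz}^N(x)\ge\varphi^N_{yz}(x)$ separately, exploiting the dictionary between flows and arc-disjoint paths established in Section \ref{proof-main}. Throughout, abbreviate $\varphi=\varphi^N_{yz}$, $\varphi_x=\varphi^{N_x}_{yz}$ and $\lambda(x)=\lambda^N_{yz}(x)$; note $N_x$ is exactly the network $N_X$ for $X=\{x\}$, i.e. all arcs incident to $x$ have capacity $0$. The case $x\in\{y,z\}$ is trivial since then $\varphi^{N_x}_{yz}=0$ (every path from $y$ to $z$ uses an arc incident to $x$, so $P^{N_x}_{yz}=\varnothing$), giving $\varphi^N_{yz}(x)=\varphi=\lambda(x)$ by the remark after Definition \ref{lambda}. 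So assume $x,y,z$ are pairwise distinct.

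For the inequality $\lambda(x)\le\varphi^N_{yz}(x)$, i.e. $\varphi_x\le\varphi-\lambda(x)$, I would start from a sequence $\bm\gamma=(\gamma_j)_{j\in[\varphi]}\in\mathcal{M}^N_{yz}(x)$ realizing the minimum, so that exactly $\lambda(x)$ of its components pass through $x$ and $\varphi-\lambda(x)$ of them do not. The components avoiding $x$ use no arc incident to $x$, hence are paths in $N_x$, and since they form a subsequence of an arc-disjoint sequence they are themselves arc-disjoint in $N_x$ (the capacity bound \eqref{ind-flow-eq} only gets easier when restricting to arcs not incident to $x$, where $c_x=c$). This produces an element of $\mathcal{S}^{N_x,\varphi-\lambda(x)}_{yz}$, whence $\varphi_x=\varphi^{N_x}_{yz}=\lambda^{N_x}_{yz}\ge\varphi-\lambda(x)$ by \eqref{lex:disjointpaths}.

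For the reverse inequality $\lambda(x)\le\varphi-\varphi_x$, the idea is to build a good maximum sequence of arc-disjoint paths in $N$ out of an optimal flow in $N_x$. Take a maximum flow $g\in\mathcal{M}(N_x,y,z)$ with $v(g)=\varphi_x$; by \eqref{utile-inclusione} it is also a flow in $N$, using no arc incident to $x$, hence $g(x)=0$. Since $v(g)=\varphi_x<\varphi$ (assuming $\varphi_x<\varphi$; if $\varphi_x=\varphi$ then Lemma \ref{lemma1-2} forces $\mathcal M^{N_x}_{yz}\subseteq\mathcal M^N_{yz}$, and any element of $\mathcal M^{N_x}_{yz}$ has no component through $x$, giving $\lambda(x)=0=\varphi-\varphi_x$), $g$ is not maximum in $N$, so by Proposition \ref{aug} we may repeatedly augment: applying the flow augmenting path algorithm $\varphi-\varphi_x$ times yields a maximum flow $\hat f=g+\sum_{j\in[\varphi-\varphi_x]}\chi_{\sigma_j}\in\mathcal M(N,y,z)$. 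The key point is to control $\hat f(x)$: each augmenting step changes $\sum_{a\in A_x^+}f(a)$ by at most $1$ (an augmenting generalized path $\sigma$ enters and leaves $x$ at most once as a forward/backward arc, and $\chi_\sigma$ takes values in $\{-1,0,1\}$), so $\hat f(x)\le g(x)+(\varphi-\varphi_x)=\varphi-\varphi_x$. Then picking any $\bm\mu\in\mathcal S^N_{yz}(\hat f)\subseteq\mathcal M^N_{yz}$ (nonempty by Theorem \ref{decomposition} and Proposition \ref{dec-x}), Lemma \ref{lemma-nuovo}$(i)$ with \eqref{fgammax} gives $l_x(\bm\mu)=f_{\bm\mu}(x)\le\hat f(x)\le\varphi-\varphi_x$, so $\lambda(x)\le l_x(\bm\mu)\le\varphi-\varphi_x=\varphi^N_{yz}(x)$, as wanted.

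The main obstacle is the bound $\hat f(x)\le\varphi-\varphi_x$ in the second half: one must argue carefully that a single augmenting-path step along a \emph{generalized} path increases $f(x)=\sum_{a\in A^+_x}f(a)$ by at most one. A clean way is to note that the generalized path $\sigma$, being a generalized path, has distinct vertices, so it passes through $x$ at most once; if it does not pass through $x$ then $\chi_\sigma$ vanishes on all arcs incident to $x$ and $f(x)$ is unchanged, while if it passes through $x$ (and $x\notin\{y,z\}$ so $x$ is an interior vertex of $\sigma$) then $\sigma$ contributes to at most one arc of $A^+_x$ with coefficient $+1$ — by the conservation-type bookkeeping the net change of $\sum_{a\in A^+_x}\chi_\sigma(a)$ is in $\{-1,0,1\}$. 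I would isolate this as a small lemma (change of $f(x)$ under augmentation) before assembling the proof. Everything else is a direct invocation of the machinery already set up: \eqref{lex:disjointpaths}, Proposition \ref{aug}, Theorem \ref{decomposition}, Proposition \ref{dec-x}, and Lemma \ref{lemma-nuovo}.
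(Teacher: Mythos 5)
Your proposal is correct, but it takes a genuinely different route from the paper, which proves the theorem by induction on $n=c(x)+c(V\setminus\{x\})$ with a dichotomy on whether the flow $f_{\bm{\gamma}}$ associated with an optimal sequence saturates every arc incident to $x$. Your first half is the easy inequality $\varphi^N_{yz}(x)\le\lambda^N_{yz}(x)$, obtained by deleting from a sequence in $\mathcal{M}^N_{yz}(x)$ the components through $x$ and reading what remains as an element of $\mathcal{S}^{N_x}_{yz}$; this is exactly the paper's Proposition \ref{ineq-utile}. (One slip: that paragraph announces ``$\lambda^N_{yz}(x)\le\varphi^N_{yz}(x)$, i.e.\ $\varphi^{N_x}_{yz}\le\varphi^N_{yz}-\lambda^N_{yz}(x)$'' but then correctly derives $\varphi^{N_x}_{yz}\ge\varphi^N_{yz}-\lambda^N_{yz}(x)$, which is the opposite inequality; fix the label --- no content is missing, since your last paragraph supplies the other direction.) Your second half is the substantive inequality $\lambda^N_{yz}(x)\le\varphi^N_{yz}(x)$: start from $g\in\mathcal{M}(N_x,y,z)$, so $g(x)=0$, augment $\varphi^N_{yz}-\varphi^{N_x}_{yz}$ times in $N$, note that an augmenting generalized path $\sigma$ visits $x$ at most once so that $\sum_{a\in A_x^+}\chi_\sigma(a)\in\{-1,0,1\}$, conclude $\hat f(x)\le\varphi^N_{yz}-\varphi^{N_x}_{yz}$, and finish by decomposing $\hat f$ via Theorem \ref{decomposition}, Proposition \ref{dec-x} and Lemma \ref{lemma-nuovo}$(i)$ with \eqref{fgammax}. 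I checked the $\pm1$ bookkeeping (the four forward/backward combinations of the two arcs of $\sigma$ at $x$ contribute $+1$, $0$, $0$, $-1$ to $\sum_{a\in A_x^+}\chi_\sigma(a)$) and the degenerate cases $x\in\{y,z\}$ and $\varphi^{N_x}_{yz}=\varphi^N_{yz}$: all are sound. The same observation is the engine of the paper's Case (II)$(b)$, there phrased as $|A_x\cap A(\sigma)|\in\{0,2\}$ hence $\sum_{a\in A_x}\chi_\sigma(a)\le2$, but the paper reaches it only inside an induction and a saturation case analysis; your proof extracts that single idea and applies it directly by iterating the augmentation from a maximum flow of $N_x$, which is shorter and arguably more transparent. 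What the induction buys in exchange are the intermediate identities of Case (II) --- how $\varphi^{N}_{yz}$ and $\lambda^{N}_{yz}(x)$ drop when one unit of saturated capacity at $x$ is removed --- which are not needed for the theorem itself. Do isolate the augmentation step as the small lemma you describe, stated for an arbitrary vertex $x\notin\{y,z\}$ and an arbitrary augmenting generalized path.
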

\begin{proof}
If $x\in \{y,z\}$, then we have $\lambda_{yz}^N(x)= \varphi^N_{yz}$ and $\varphi^{N_x}_{yz}=0$. Thus,  the equality $\lambda_{yz}^N(x)= \varphi^N_{yz}(x)$ is certainly true.
We complete the proof proving that, if $x\not\in \{y,z\}$, then we have
$\lambda_{yz}^N(x)= \varphi^N_{yz}(x)$.
Observe first  that
\begin{equation}\label{s0}
\mbox{$\lambda^{N}_{yz}(x)=0\;\;$ implies  $\;\;\lambda^{N}_{yz}(x)=\varphi^{N}_{yz}(x)$.}
\end{equation}
Indeed, if $\lambda^{N}_{yz}(x)=0$, then there exists $\bm{\gamma}\in \mathcal{M}^{N}_{yz}$ such that $l_x(\bm{\gamma})=0$.
Thus, $\bm{\gamma}\in \mathcal{S}^{N_x}_{yz}$, which gives $\varphi^{N_{x}}_{yz}\geq \varphi^{N}_{yz}$. Since by \eqref{utile-disu} we have $\varphi^{N_{x}}_{yz}=\varphi^{N}_{yz}$, we deduce that  $\lambda^{N}_{yz}(x)=\varphi^{N}_{yz}(x)=0$.

Consider now, for $n\in \mathbb{N}_0$, the following statement:
\begin{equation}\label{statement-n}
\begin{array}{l}
\mbox{For every $N=(K_V,c)\in\mathscr{N}$, $x,y,z\in V$ distinct  and $c(x) + c(V\setminus \{x\})=n$,}\\
\mbox{we have that  $\lambda^{N}_{yz}(x)=\varphi^{N}_{yz}-\varphi^{N_{x}}_{yz}$.}
\end{array}
\end{equation}
We are going  to prove the theorem showing, by induction on $n$, that \eqref{statement-n} holds true for all $n\in \mathbb{N}_0$.

Consider first $N =(K_V,c)\in\mathscr{N}$ and $x,y,z\in V$ distinct with $c(x) + c(V\setminus \{x\})=0.$
Then $c(x)=c(V\setminus \{x\})=0$ which,  by Lemma \ref{ind-outd}, implies $\lambda^{N}_{yz}(x) = 0$ and,  by \eqref{s0}, the statement holds.

Consider now $N =(K_V,c)\in\mathscr{N}$,  and $x,y,z\in V$ distinct with $c(x) + c(V\setminus \{x\})=n\ge 1$.
For brevity, let us set $\lambda^{N}_{yz}(x)=s$ and $\varphi^{N}_{yz}=m$.
By Proposition \ref{ineq-utile}$(i)$, we have that $0\leq s\le m$.
If $s=0$, then we again conclude by \eqref{s0}. Assume then $s\ge 1$. As a consequence, we also have $m\ge 1$.  Choose among the sequences in $ \mathcal{M}^{N}_{yz}(x)$  a sequence $\bm{\gamma}  \in \mathcal{M}^{N}_{yz}(x)$ in which the components passing through $x$ are
 the last  $s$.
 Let  $f_{\bm{\gamma}}$  be the flow associated with $\bm{\gamma}$,
defined in \eqref{ind-flow-def}. Recall that $l(\bm{\gamma})=m$ and $l_x(\bm{\gamma})=s$.

\vspace{2mm}

We divide our argument into two cases.

\vspace{2mm}

\noindent {\it Case (I).} Assume that there exists $\tilde{a}\in  A_{x}$ such that $f_{\bm{\gamma}}(\tilde{a}) < c(\tilde{a}).$
Then, obviously, $c(\tilde{a})\ge 1$.

Consider the network $\tilde{N}=(K_V, \tilde{c})$ where $\tilde{c}$ is defined, for every $a\in A$, as
\begin{equation*}
\tilde{c}(a) = \begin{cases}c(a)\quad\quad\quad \hbox{if} \quad a \ne \tilde{a} \\ c(\tilde{a}) - 1 \quad\,\, \hbox{if}\quad  a = \tilde{a} \end{cases}
\end{equation*}
and note now that, for every $a\in A$, $\tilde{c}(a)\le c(a)$.
Since $\tilde{c}(x) + \tilde{c}(V\setminus \{x\})=c(x) + c(V\setminus \{x\})-1=n-1$, by inductive assumption we get
\begin{equation*}
\label{eq:thesisfortilde}
\lambda^{\tilde{N}}_{yz}(x) = \varphi^{\tilde{N}}_{yz} - \varphi^{\tilde{N}_{x}}_{yz}.
\end{equation*}

It is immediate to observe that $\tilde{N}_{x} = N_{x}$, so that $\varphi^{\tilde{N}_{x}}_{yz} = \varphi^{N_{x}}_{yz}$.
We also have $\bm{\gamma}\in\mathcal{S}^{\tilde{N}}_{yz}$ and then
$\varphi^{\tilde{N}}_{yz}\ge l(\bm{\gamma})=m= \varphi^{N}_{yz}$. Moreover, by \eqref{utile-disu}, we also have $\varphi^{\tilde{N}}_{yz}\le \varphi^{N}_{yz}$. Thus, $\varphi^{\tilde{N}}_{yz} = \varphi^{N}_{yz}$ and $\bm{\gamma}\in\mathcal{M}^{\tilde{N}}_{yz}$. As a consequence, $\varphi^{N}_{yz} - \varphi^{N_x}_{yz}=\varphi^{\tilde{N}}_{yz} - \varphi^{\tilde{N}_{x}}_{yz}$.
We are then left with proving that  $\lambda^{\tilde{N}}_{yz}(x) =\lambda^{N}_{yz}(x)$.
Note  that $\lambda^{\tilde{N}}_{yz}(x)\le l_x(\bm{\gamma})=s.$
Assume now, by contradiction, that there exists  $\tilde{\bm{\gamma}}\in\mathcal{M}^{\tilde{N}}_{yz}$ such that  $l_x(\tilde{\bm{\gamma}})<s$.
As proved before, $\varphi^{\tilde{N}}_{yz} = \varphi^{N}_{yz}$ and then, by Lemma \ref{lemma1-2}, we have that $\tilde{\bm{\gamma}}\in\mathcal{M}^{N}_{yz}$ and then
 $\lambda^{N}_{yz}(x)\le l_x(\tilde{\bm{\gamma}})<s$, a contradiction.

\vspace{2mm}

\noindent {\it Case (II).}  Assume now that, for every $a\in  A_{x}$, we have
\begin{equation}
\label{lex:point2paper}
f_{\bm{\gamma}}(a)= c(a).
\end{equation}
By Lemma \ref{lemma-nuovo}$(iii)$, we then get
\begin{equation}\label{star}
s=\sum_{a\in  A_x^+}f_{\bm{\gamma}}(a)=\sum_{a\in  A_x^+}c(a)=c(x).
\end{equation}

The component $\gamma_m$ of $\bm{\gamma}$ passes through $x$ and reaches $z\neq x$. Thus there exists $\tilde{a} \in A_{x}^{+}\cap A(\gamma_m)$  and, by \eqref{lex:point2paper}, we have
$c(\tilde{a})\geq 1.$

Define the network $\tilde{N} = (K_V, \tilde{c}) \in \mathscr{N}$ by:
\begin{equation*}
\tilde{c}(a) = \begin{cases}c(a)\quad\quad\hbox{ if }\,\,a \ne \tilde{a} \\ c(\tilde{a}) - 1 \ \hbox{ if}\ a = \tilde{a}. \end{cases}
\end{equation*}
and note now that, for every $a\in A$, $\tilde{c}(a)\le c(a)$.
Since $\tilde{c}(x)=c(x)-1$ and $\tilde{c}(V\setminus \{x\})=c(V\setminus \{x\})$, we have that $\tilde{c}(x) + \tilde{c}(V\setminus \{x\})=n-1$. Hence, by inductive assumption, we get
$
\lambda^{\tilde{N}}_{yz}(x) = \varphi^{\tilde{N}}_{yz} - \varphi^{\tilde{N}_{x}}_{yz}.
$
In order to complete the proof we show the following three equalities:
\begin{itemize}
\item [$(a)$] $\varphi^{\tilde{N}}_{yz} = \varphi^{N}_{yz} - 1$;
\item [$(b)$] $\lambda^{\tilde{N}}_{yz}(x) = \lambda^{N}_{yz}(x) - 1$;
\item [$(c)$] $\varphi^{\tilde{N}_{x}}_{yz} = \varphi^{N_{x}}_{yz}.$
\end{itemize}

Let us start by considering  $\tilde{\bm{\gamma}}\in \mathcal{S}^{N}_{yz} $ obtained by $\bm{\gamma}$ by deleting the component $\gamma_m$. In other words,
$\tilde{\bm{\gamma}}=(\tilde{\gamma}_j)_{j\in [m-1]}$ where, for every $j\in [m-1]$, $\tilde{\gamma}_j=\gamma_j$.
By definition of $\tilde{N}$, we surely have $\tilde{\bm{\gamma}}\in \mathcal{S}^{\tilde{N}}_{yz}$ and thus
\begin{equation}\label{prima}
\varphi^{\tilde{N}}_{yz}\ge l(\tilde{\bm{\gamma}}) = m - 1 = \varphi^{N}_{yz} - 1.
\end{equation}
Moreover, by Lemma \ref{ind-outd} and \eqref{star}, we have
\begin{equation}
\label{lex:appendixpaper}
\lambda^{\tilde{N}}_{yz}(x) \le \tilde{c}(x) = c(x) -1=s-1.
\end{equation}
Let us now prove the equalities $(a)$, $(b)$ and $(c)$.

$(a)$ Assume by contradiction that $\varphi^{\tilde{N}}_{yz} > \varphi^{N}_{yz} - 1$, that is, $\varphi^{\tilde{N}}_{yz} \ge \varphi^{N}_{yz}$.
By \eqref{utile-disu}, we then obtain $\varphi^{N}_{yz} = \varphi^{\tilde{N}}_{yz}$. By Lemma \ref{lemma1-2}, we also deduce that
$\mathcal{M}^{\tilde{N}}_{yz} \subseteq \mathcal{M}^{N}_{yz}$ so that $\lambda^{\tilde{N}}_{yz}(x) \ge s.$
On the other hand, by \eqref{lex:appendixpaper}, we also have $\lambda^{\tilde{N}}_{yz}(x) \le s - 1$,  a contradiction. As a consequence, $\varphi^{\tilde{N}}_{yz} \le  \varphi^{N}_{yz} - 1$. Using now \eqref{prima}, we conclude $\varphi^{\tilde{N}}_{yz} =  \varphi^{N}_{yz} - 1$, as desired.

$(b)$ Let us prove now $\lambda^{\tilde{N}}_{yz}(x) = s - 1$. By \eqref{lex:appendixpaper} it is enough to show
\begin{equation}
\label{lex:pointbsez2paper}
\lambda^{\tilde{N}}_{yz}(x) \ge s - 1.
\end{equation}
Set $\lambda^{\tilde{N}}_{yz}(x)=\tilde{s}$. From ($a$) we know that $\varphi^{\tilde{N}}_{yz}=m-1$.
Let $\tilde{\bm{\nu}}\in\mathcal{M}^{\tilde{N}}_{yz}(x)$.
Thus $l(\tilde{\bm{\nu}})=m-1$ and $l_x(\tilde{\bm{\nu}})=\tilde{s}.$
By Proposition \ref{ind-flow}, we have that $f_{\tilde{\bm{\nu}}}\in \mathcal{F}(\tilde{N},y,z)\subseteq \mathcal{F}(N,y,z)$ and $v(f_{\tilde{\bm{\nu}}})=m-1.$
Thus $f_{\tilde{\bm{\nu}}}\in\mathcal{F}(N,y,z)\setminus \mathcal{M}(N,y,z)$ so that
 $AP^N_{yz}(f_{\tilde{\bm{\nu}}})\neq \varnothing$.
Pick then $\sigma \in AP^N_{yz}(f_{\tilde{\bm{\nu}}})$. By Proposition \ref{aug}, we have that $f=f_{\tilde{\bm{\nu}}}+\chi_{\sigma}\in \mathcal{M}(N,y,z).$
By \eqref{fgammax}, we then have
\begin{equation}\label{calcolo1}
2f(x)=\sum_{a\in A^+_x}f(a)+\hspace{-1mm}\sum_{a\in A^-_x}f(a)=\hspace{-1mm}\sum_{a\in A_x}f(a)=\hspace{-1mm}\sum_{a\in A_x}f_{\tilde{\bm{\nu}}}(a)+\hspace{-1mm}\sum_{a\in A_x}\chi_{\sigma}(a)=2f_{\tilde{\bm{\nu}}}(x)+\hspace{-1mm}
\sum_{a\in A_x}\chi_{\sigma}(a) \leq 2f_{\tilde{\bm{\nu}}}(x)+2.
\end{equation}
The last inequality follows from the fact that
\begin{equation}\label{one}
\sum_{a\in A_x}\chi_{\sigma}(a)\le 2.
\end{equation}
Indeed, by definition \eqref{chi-generalizzato}, we have
\[
\chi_{\sigma}=\sum_{a\in A(\sigma)^+} \chi_{a}- \sum_{a\in A(\sigma)^-} \chi_{a}.
\]
In particular, $\chi_{\sigma}(a)=0$ for all $a\in A_x\setminus A(\sigma)$ and $\chi_{\sigma}(a)\leq 1$ for all $a\in A_x\cap A(\sigma).$
Now, by definition of generalized path,
we have  $|A_x\cap A(\sigma)|\in\{0, 2\}$ and thus \eqref{one} holds.

By \eqref{calcolo1} and \eqref{fgammax}, we then obtain $f(x)\leq f_{\tilde{\bm{\nu}}}(x)+1=\tilde{s}+1$.
On the other hand, by Lemma \ref{lemma-nuovo}\,$(ii)$, we  also have
$
s\leq f(x)
$
and thus $s\le \tilde{s}+1$, which is \eqref{lex:pointbsez2paper}.

$(c)$ Clearly we have that  $N_{x} = \tilde{N}_{x}$ and thus
$\varphi^{\tilde{N}_{x}}_{yz} = \varphi^{N_{x}}_{yz}.$
\end{proof}

The next proposition shows that the equality $\lambda_{yz}^N(X)=\varphi_{yz}^N(X)$ does not hold true in general when $X$ is not a singleton.
Proposition \ref{bang} follows by an example due to Bang-Jensen (2019).

\begin{proposition}\label{bang}	
There exist $N=(K_V,c)\in\mathscr{N}, X\subseteq V$ and $y,z\in V$ distinct such that  $\lambda_{yz}^N(X)> \varphi_{yz}^N(X).$
\end{proposition}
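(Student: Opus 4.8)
The statement only asks for the existence of one instance, so the plan is to construct an explicit small network $N=(K_V,c)$ together with a two-element set $X$ and distinct $y,z$ realizing $\lambda^N_{yz}(X)>\varphi^N_{yz}(X)$. The mechanism to exploit is the non-extendability phenomenon stressed right after \eqref{lex:disjointpaths}: one wants a $y$-$z$ path that avoids $X$ but that shares an arc with \emph{every} path occurring in \emph{any} maximum family of arc-disjoint paths, so that it can never be a component of an element of $\mathcal M^N_{yz}$. The proof of Theorem \ref{main} shows that with $|X|=1$ this cannot be leveraged (an augmenting path meets $A_x$ in $0$ or $2$ arcs, as in \eqref{one}); the whole point of the proposition is that with $|X|\ge 2$ the accounting breaks.

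Concretely, I would take $V=\{y,p,q,x_1,x_2,z\}$, put $X=\{x_1,x_2\}$, and assign capacity $1$ to each of the seven arcs $yp,\ pq,\ qz,\ px_1,\ x_1z,\ yx_2,\ x_2q$ and capacity $0$ to every other arc of $K_V$. This $N$ is acyclic, which keeps all the bookkeeping elementary. One checks that the only $y$-$z$ paths in $N$ are $P_1=ypqz$, $P_2=ypx_1z$ and $P_3=yx_2qz$; of these, $P_1$ is the only one avoiding $X$, while $P_2$ passes through $x_1$ and $P_3$ through $x_2$.

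The verification then has three steps. (i) $\varphi^N_{yz}=2$: every flow from $y$ to $z$ has value at most $c(y)=2$ by \eqref{compatibility}, and $(P_2,P_3)$ is a sequence of two arc-disjoint paths, so by \eqref{lex:disjointpaths} we get $\varphi^N_{yz}=\lambda^N_{yz}=2$. (ii) $\varphi^{N_X}_{yz}=1$: among the positive-capacity arcs, those incident to $X$ are exactly $px_1,x_1z,yx_2,x_2q$, so in $N_X$ only $yp,pq,qz$ keep positive capacity, and these carry precisely the single path $P_1$; hence $\varphi^N_{yz}(X)=2-1=1$. (iii) $\lambda^N_{yz}(X)=2$: since $P_1$ shares the arc $yp$ with $P_2$ and the arc $qz$ with $P_3$, the only pair of arc-disjoint $y$-$z$ paths is $(P_2,P_3)$, so every element of $\mathcal M^N_{yz}$ is equivalent to $(P_2,P_3)$; as both its components meet $X$, $l_X(\bm\gamma)=2$ for all $\bm\gamma\in\mathcal M^N_{yz}$, whence $\lambda^N_{yz}(X)=2>1=\varphi^N_{yz}(X)$.

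The only step that genuinely requires care is (iii): it rests on an exhaustive listing of the $y$-$z$ paths of $N$ and of the arc-disjoint pairs among them, and the conclusion would fail if even one further path had been missed --- in particular a ``detour'' path running through both $x_1$ and $x_2$, or one leaving enough residual capacity for $P_1$ to be extended to a maximum sequence. So the main obstacle is simply to argue cleanly that $P_1,P_2,P_3$ exhaust $P^N_{yz}$ and that $(P_2,P_3)$ is the unique arc-disjoint pair; the acyclicity of $N$ and the small arc set make this a short but necessary check. (Equivalently one may invoke verbatim the example attributed to Bang-Jensen (2019), whose structure is the same.)
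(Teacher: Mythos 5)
Your proof is correct and takes the same approach as the paper: an explicit counterexample with $|X|=2$, verified by enumerating the $y$--$z$ paths and the arc-disjoint pairs among them, and your enumeration is complete ($P_1,P_2,P_3$ are indeed the only paths with positive-capacity arcs, $(P_2,P_3)$ is the only arc-disjoint pair since $P_1$ shares $(y,p)$ with $P_2$ and $(q,z)$ with $P_3$, and $c(y)=2$ caps the maximum length at $2$). Your network is in fact smaller than the one the paper uses (6 vertices and 7 positive-capacity arcs versus 8 and 11), so your step (iii) is a direct inspection, whereas the paper's example requires a short argument by contradiction to rule out a maximum triple with only one component meeting $X$.
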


\begin{proof} Consider the network $N$ in Figure \ref{fig:BJ} and $X=\{x_1,x_2\}$. Then we have 
\begin{figure}[t]
\begin{center}
\begin{tikzpicture}
\begin{scope}[every node/.style={circle,thick,draw,fill=lightgray}]
    \node[style={circle,thick,draw,fill=white}]  (A) at (0,0) {$y$};
    \node[style={circle,thick,draw,fill=lightgray}]  (B) at (2.5,1.2) {$u_1$};
    \node[style={circle,thick,draw,fill=lightgray}]  (C) at (2.5,-1.2) {$u_2$};
    \node[style={circle,thick,draw,fill=lightgray}]  (D) at (5,1.2) {$v_1$};
    \node[style={circle,thick,draw,fill=lightgray}]  (E) at (5,-1.2) {$v_2$};
    \node[style={circle,thick,draw,fill=lightgray}]  (F) at (7.5,1.2) {$x_1$} ;
		\node[style={circle,thick,draw,fill=lightgray}]  (G) at (7.5,-1.2) {$x_2$} ;
		\node[style={circle,thick,draw,fill=white}]  (H) at (10,0) {$z$} ;
\end{scope}
\begin{scope}[>=stealth, every edge/.style={thick,draw}]
    \path [->] (A) edge node[pos=0.5,anchor=south] {1} (B);
    \path [->] (A) edge node[pos=0.5,anchor=north] {1} (C);
    \path [->] (B) edge node[pos=0.5,anchor=north] {1} (D);
    \path [->] (C) edge node[pos=0.5,anchor=south] {1} (E);
    \path [->] (D) edge[bend right=10] node[pos=0.5,anchor=south] {1} (H);
    \path [->] (E) edge[bend left=5] node[pos=0.5,anchor=south] {1} (H);
    \path [->] (A) edge[bend left=20] node[pos=0.5,anchor=south] {1} (E);
    \path [->] (C) edge[bend right=30] node[pos=0.5,anchor=north] {1} (G);
    \path [->] (G) edge[bend right=10] node[pos=0.5,anchor=north] {1} (D);
		\path [->] (B) edge[bend left=30] node[pos=0.5,anchor=south] {1} (F);
		\path [->] (F) edge node[pos=0.5,anchor=south] {1} (H);
\end{scope}
\end{tikzpicture}
\end{center}
\caption{$\lambda_{yz}^N(X)> \varphi_{yz}^N(X)$ for $X=\{x_1,x_2\}$}\label{fig:BJ}
\end{figure}
 It is immediately checked that $\varphi^N_{yz}=3$ and $\varphi^{N_X}_{yz}=2$, so that $\varphi^N_{yz}(X)=1$.
We show that $\lambda_{yz}^N(X)>\varphi^N_{yz}(X)$ proving that $\lambda_{yz}^N(X)=2$. Consider
\[
\bm{\gamma}=(yv_2z,\, yu_2x_2v_1z,\, yu_1x_1z)\in\mathcal{M}_{yz}^{N}.
\]
Since $l_X(\bm{\gamma})=2$, we have that $\lambda_{yz}^N(X)\leq 2$. Moreover, by \eqref{ineq-utile1}, we know that $\lambda_{yz}^N(X)\geq \varphi^N_{yz}(X)=1.$ Thus $\lambda_{yz}^N(X)\in \{1,2\}.$

Assume, by contradiction, that $\lambda_{yz}^N(X)=1.$ Then there exists $\bm{\mu}=(\mu_1,\mu_2,\mu_3)\in\mathcal{M}_{yz}^{N}(X)$ such that only $\mu_3$ passes through $X.$ Thus, $\tilde{\bm{\mu}}=(\mu_1,\mu_2)\in\mathcal{S}_{yz}^{N_X}$ and, since $\varphi^{N_X}_{yz}=2$, we deduce that $\tilde{\bm{\mu}}\in\mathcal{M}_{yz}^{N_X}.$ Hence, it is immediately observed that there are only two possibilities for a sequence of 2 arc-disjoint paths from $y$ to $z$ in $N_X$ (up to reordering of the components). More precisely, we have
\[
\tilde{\bm{\mu}}=(yu_1v_1z,yu_2v_2z)\quad \mbox{ or }\quad \tilde{\bm{\mu}}=(yu_1v_1z,yv_2z).
\]

If $\tilde{\bm{\mu}}=(yu_1v_1z,yu_2v_2z)$, then $\bm{\mu}=(yu_1v_1z,yu_2v_2z,\mu_3)$. Assume first that $\mu_3$ passes through $x_1$. Then the only arc entering into $x_1$ and having capacity $1$, that is $(u_1,x_1)$, must be an arc of $\mu_3$. That forces $A(\mu_3)$ to contain also the arc $(y, u_1)$. On the other hand, that arc is also an arc of $yu_1v_1z$ and we contradict the independence requirement.
Similarly, if $\mu_3$ passes through $x_2$, then the only arc entering into $x_2$ and having capacity $1$, that is $(u_2,x_2)$, must be an arc of $\mu_3$. That forces $A(\mu_3)$ to contain also the arc $(y, u_2)$, which is an arc of the path $yu_2v_2z$, again
against the independence requirement.

If now $\tilde{\bm{\mu}}=(yu_1v_1z,yv_2z)$, then $\bm{\mu}=(yu_1v_1z,yv_2z,\mu_3)$.
As in the previous case, there is no way to include $x_1$ as a vertex of $\mu_3$. Moreover, if $\mu_3$ passes through $x_2$, then necessarily $\mu_3=yu_2x_2v_1z$. Hence, $A(\mu_3)$ must contain the arc $(v_1,z)$, which is an arc of $yu_1v_1z$ against the independence requirement.
\end{proof}

Proposition \ref{bang} ultimately clarifies that the numbers $\lambda_{yz}^N(X)$ and $\varphi_{y,z}^N(X)$ stem from different ideas and that any alleged intuition above their equality is wrong. In other words, Theorem \ref{main} is a pure miracle.

\section{Further properties of $\varphi_{y,z}^N(X)$ and $\lambda_{yz}^N(X)$}\label{further}

Let us introduce a new concept based on Definition \ref{flow-per-x}.

\begin{definition}\label{delta}
Let  $N=(K_V,c)\in \mathscr{N}$, $y,z\in V$ be distinct and $X\subseteq V$. We define
\[
\delta^N_{yz}(X):=\min_{f\in \mathcal{M}(N,y,z)}f(X).
\]
\end{definition}
The number $\delta^N_{yz}(X)$ represents the global flow that must pass through $X$ in any maximum flow.
Since $\mathcal{M}(N,y,z)\ne\varnothing$, $\delta^{N}_{yz}(X)$ is well defined. 
Note that $X\cap \{y,z\}\neq \varnothing$ implies $\delta^N_{yz}(X)\geq \varphi^N_{yz}$.\footnote {It is easily seen that 
\[
\delta^N_{yz}(V)=\varphi^N_{yz}+\sum_{a\in A\setminus (A_y^-\cup A_z^+)}f(a)
\]}
Moreover, given $X\subseteq Y\subseteq V$, it is immediately observed that $0\le \delta^N_{yz}(X)\le \delta^N_{yz}(Y)$.

Proposition \ref{caratt-lambda}, Corollary \ref{corollary} and Proposition \ref{ineq-utile} show some interesting links among $\varphi_{yz}^N(X)$, $\lambda_{yz}^N(X)$  and $\delta_{yz}^N(X)$. In particular, we get that $\delta_{yz}^N(X)$ provides a characterization of $\varphi_{yz}^N(X)$ and $\lambda_{yz}^N(X)$ when $X$ is a singleton.

\begin{proposition}\label{caratt-lambda}
Let $N =(K_V,c)\in\mathscr{N}$, $y,z\in V$  be distinct and $X\subseteq V$. Then $\delta^N_{yz}(X)\geq \lambda^N_{yz}(X)$ and equality holds when $X$ is a singleton.
\end{proposition}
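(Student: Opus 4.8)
The plan is to prove the inequality $\delta^N_{yz}(X)\geq \lambda^N_{yz}(X)$ by exhibiting, for each maximum flow $f$, a maximum sequence of arc-disjoint paths $\bm{\gamma}$ with $f(X)\geq l_X(\bm{\gamma})$, and then to establish the reverse inequality in the singleton case via Theorem \ref{main}.

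\begin{proof}[Proof sketch]
First I would prove $\delta^N_{yz}(X)\geq \lambda^N_{yz}(X)$. Let $f\in\mathcal{M}(N,y,z)$ be arbitrary. By Theorem \ref{decomposition} (via Proposition \ref{dec-x}$(ii)$), pick $\bm{\gamma}\in\mathcal{S}^N_{yz}(f)\subseteq\mathcal{M}^N_{yz}$, so that $f=f_{\bm{\gamma}}+\sum_{j\in[k]}\chi_{w_j}$ for some cycles $w_j$ in $N$. The idea is that $f(X)$ counts, with multiplicity, the flow crossing the boundary of $X$, and this is at least the number of components of $\bm{\gamma}$ meeting $X$. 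Concretely, for each vertex $x\in X\setminus\{y,z\}$ one has $f(x)=\sum_{a\in A_x^+}f(a)\geq\sum_{a\in A_x^+}f_{\bm{\gamma}}(a)=f_{\bm{\gamma}}(x)=l_x(\bm{\gamma})$ as in Lemma \ref{lemma-nuovo}$(i)$; summing over $x\in X$ gives $f(X)\geq\sum_{x\in X}l_x(\bm{\gamma})\geq l_X(\bm{\gamma})$, the last step because a path meeting $X$ is counted at least once in $\sum_{x\in X}l_x(\bm{\gamma})$ (here the subadditivity $l_X(\bm{\gamma})\leq\sum_{x\in X}l_x(\bm{\gamma})$ is the crucial combinatorial point, and it is where a path passing through several vertices of $X$ gets overcounted, which is exactly why the inequality can be strict). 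Since $\bm{\gamma}\in\mathcal{M}^N_{yz}$, we get $f(X)\geq l_X(\bm{\gamma})\geq\lambda^N_{yz}(X)$, and minimizing over $f$ yields $\delta^N_{yz}(X)\geq\lambda^N_{yz}(X)$. The case $X\cap\{y,z\}\neq\varnothing$ is handled separately but is immediate, since then both sides equal $\varphi^N_{yz}$ up to the trivial bound $f(X)\geq v(f)$.

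Next I would prove the reverse inequality $\delta^N_{yz}(x)\leq\lambda^N_{yz}(x)$ when $X=\{x\}$ is a singleton. If $x\in\{y,z\}$ this needs the footnote-type observation and is routine, so assume $x\notin\{y,z\}$. By Theorem \ref{main}, $\lambda^N_{yz}(x)=\varphi^N_{yz}(x)=\varphi^N_{yz}-\varphi^{N_x}_{yz}$. The plan is to produce a specific maximum flow $f$ with $f(x)\leq\varphi^N_{yz}-\varphi^{N_x}_{yz}$. To do this, start from a maximum flow $g\in\mathcal{M}(N_x,y,z)$, which has value $\varphi^{N_x}_{yz}$ and satisfies $g(x)=0$ since all arcs incident to $x$ have zero capacity in $N_x$; view $g$ as a (non-maximum, if $\varphi^{N_x}_{yz}<\varphi^N_{yz}$) flow in $N$ via \eqref{utile-inclusione}. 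Then repeatedly apply the flow augmenting path algorithm (Proposition \ref{aug}) inside $N$: each augmentation along a generalized path $\sigma$ increases the value by one and, as in \eqref{one}, changes $f(x)$ by at most one, because $|A_x\cap A(\sigma)|\in\{0,2\}$ forces $\sum_{a\in A_x}\chi_\sigma(a)\leq 2$, hence $f(x)$ grows by at most $1$ per step. After $\varphi^N_{yz}-\varphi^{N_x}_{yz}$ augmentations we reach a maximum flow $f$ with $f(x)\leq g(x)+(\varphi^N_{yz}-\varphi^{N_x}_{yz})=\varphi^N_{yz}-\varphi^{N_x}_{yz}=\lambda^N_{yz}(x)$, so $\delta^N_{yz}(x)\leq f(x)\leq\lambda^N_{yz}(x)$. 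Combined with the first part, this gives equality.

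The main obstacle I anticipate is the reverse inequality: one must carefully track how $f(x)$ can change under an augmentation. The subtlety is that an augmenting path may use both an arc in $A_x^+$ and an arc in $A_x^-$ (possibly backward arcs), so the bookkeeping on $\sum_{a\in A_x}\chi_\sigma(a)$ must account for signs; the clean statement is that $f(x)=\tfrac12\sum_{a\in A_x}f(a)$ for $x\notin\{y,z\}$ and that each generalized path contributes $-1$, $0$, or $+1$ to the net $x$-flow depending on how $\sigma$ enters and leaves the cut $(\{x\},V\setminus\{x\})$, giving the per-step bound of $1$. An alternative, perhaps cleaner, route for the singleton case is to invoke Lemma \ref{lemma-nuovo}$(iii)$: take $\bm{\gamma}\in\mathcal{M}^N_{yz}(x)$, so $f_{\bm{\gamma}}$ is a maximum flow with $f_{\bm{\gamma}}(x)=l_x(\bm{\gamma})=\lambda^N_{yz}(x)$, whence $\delta^N_{yz}(x)\leq f_{\bm{\gamma}}(x)=\lambda^N_{yz}(x)$ directly, with no augmenting-path argument needed at all; this is the shortest path to the result and is the version I would ultimately write.
\end{proof}
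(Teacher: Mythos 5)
Your proposal is correct and, in the form you say you would ultimately write it, coincides with the paper's proof: the inequality is obtained exactly as in the paper via Lemma \ref{lemma-nuovo}$(i)$, the decomposition $\bm{\gamma}\in\mathcal{S}^N_{yz}(f)\subseteq\mathcal{M}^N_{yz}$, and the subadditivity $l_X(\bm{\gamma})\le\sum_{x\in X}l_x(\bm{\gamma})$, while the singleton equality follows from Lemma \ref{lemma-nuovo}$(iii)$ applied to $\bm{\gamma}\in\mathcal{M}^N_{yz}(x)$. The augmenting-path route you first sketch (which also leans on Theorem \ref{main}, a much heavier tool) is workable but unnecessary, and you rightly discard it.
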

\begin{proof} Assume first that $X\cap\{y,z\}\ne \varnothing.$  Then we have $\lambda^N_{yz}(X)=\varphi^N_{yz}\leq \delta^N_{yz}(X).$
If $X=\{x\},$  so that $x=y$ or $x=z$, recalling Definition \ref{flow-per-x}, we have  that for every  $f\in \mathcal{M}(N,y,z)$, $f(x)= \varphi^N_{yz}$ and so  also $ \delta^N_{yz}(x)= \varphi^N_{yz}=\lambda^N_{yz}(x).$ 

Assume next that $X\cap\{y,z\}= \varnothing.$  Let $f\in \mathcal{M}(N,y,z)$ and $(\bm{\gamma},\bm{w})$ be a decomposition of $f$. By Lemma \ref{lemma-nuovo}\,$(i)$, we have 
\begin{equation}\label{primo}
f(X)=\sum_{x\in X}f(x)\geq \sum_{x\in X}f_{\bm{\gamma}}(x)=f_{\bm{\gamma}}(X),
\end{equation}
where, by Proposition \ref{ind-flow}, $f_{\bm{\gamma}}\in \mathcal{M}(N,y,z)$ and $\bm{\gamma}\in \mathcal{M}^N_{yz}$. We now observe that, by \eqref{fgammax}, we have
\begin{equation}\label{secondo}
f_{\bm{\gamma}}(X)=\sum_{x\in X}f_{\bm{\gamma}}(x)=\sum_{x\in X}l_x({\bm{\gamma}})\geq l_X({\bm{\gamma}})
\end{equation}
where the inequality in the chain is an equality when $X$ is a singleton.
By \eqref{primo} and \eqref{secondo} it then follows 
\begin{equation*}\label{terzo}
\delta^N_{yz}(X)=\min_{f\in \mathcal{M}(N,y,z)} f(X)=\min_{\bm{\gamma}\in \mathcal{M}^N_{yz}} f_{\bm{\gamma}}(X)\geq \min_{\bm{\gamma}\in \mathcal{M}^N_{yz}}l_{X}(\bm{\gamma})=\lambda^{N}_{yz}(X)
\end{equation*}
and  the inequality in the chain is an equality when $X$ is a singleton.
\end{proof}

\begin{corollary}\label{corollary}
Let $N =(K_V,c)\in\mathscr{N}$ and  $x,y,z\in V$  with $y,z$ distinct. Then $\lambda_{yz}^N(x)= \varphi^N_{yz}(x)=\delta_{yz}^N(x)$.
\end{corollary}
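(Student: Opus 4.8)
The plan is to combine the two earlier results that together pin down both quantities. By Theorem~\ref{main} we already know $\lambda_{yz}^N(x)=\varphi^N_{yz}(x)$, so it suffices to establish $\lambda_{yz}^N(x)=\delta_{yz}^N(x)$. But this is exactly the equality case of Proposition~\ref{caratt-lambda}: that proposition asserts $\delta^N_{yz}(X)\geq \lambda^N_{yz}(X)$ in general, with equality whenever $X$ is a singleton. Applying it to $X=\{x\}$ gives $\delta_{yz}^N(x)=\lambda_{yz}^N(x)$ directly.

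Concretely, I would write: by Theorem~\ref{main}, $\lambda_{yz}^N(x)=\varphi^N_{yz}(x)$; by Proposition~\ref{caratt-lambda} applied to the singleton $X=\{x\}$, $\delta_{yz}^N(x)=\lambda_{yz}^N(x)$. Chaining these two equalities yields $\lambda_{yz}^N(x)=\varphi^N_{yz}(x)=\delta_{yz}^N(x)$, as claimed.

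There is essentially no obstacle here; the corollary is a pure juxtaposition of two substantial results already in hand, and the only thing to check is that the hypotheses of both match the hypotheses of the corollary — namely $N\in\mathscr{N}$, $x,y,z\in V$ with $y,z$ distinct, which is precisely what is assumed (note $x$ is allowed to coincide with $y$ or $z$, and both Theorem~\ref{main} and Proposition~\ref{caratt-lambda} cover that case). So the proof is a two-line citation.

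\begin{proof}
By Theorem~\ref{main} we have $\lambda_{yz}^N(x)=\varphi^N_{yz}(x)$. By Proposition~\ref{caratt-lambda} applied to the singleton $X=\{x\}$, we have $\delta_{yz}^N(x)=\lambda_{yz}^N(x)$. Combining these two equalities gives $\lambda_{yz}^N(x)=\varphi^N_{yz}(x)=\delta_{yz}^N(x)$.
\end{proof}
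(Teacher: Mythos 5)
Your proof is correct and is exactly the paper's own argument: the authors likewise obtain the corollary by combining Theorem~\ref{main} with the singleton equality case of Proposition~\ref{caratt-lambda}. Nothing further is needed.
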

\begin{proof}
Simply apply Theorem \ref{main} and Proposition \ref{caratt-lambda}.
\end{proof}

\begin{proposition}\label{ineq-utile}	
Let $N=(K_V,c)\in\mathscr{N}$, $y,z\in V$ be distinct and  $X\subseteq V$. Then 
\begin{equation*}\label{ineq-utile1}
0\le \varphi_{yz}^N(X)\leq \lambda_{yz}^N(X)\leq \min\{\delta_{yz}^N(X),\varphi_{yz}^N\}.
\end{equation*}
\end{proposition}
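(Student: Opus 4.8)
The plan is to prove the chain of inequalities one link at a time, noting that all but one of them follow directly from earlier results; the only link with real content is $\varphi^N_{yz}(X)\le\lambda^N_{yz}(X)$, which I would obtain by extracting from an optimal sequence of arc-disjoint paths the sub-sequence of those paths that avoid $X$.

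For the three outer links: the inequality $0\le\varphi^N_{yz}(X)$ is exactly Proposition~\ref{monot-phi} applied with $Y=X$ (equivalently, it follows from $c_X\le c$ together with~\eqref{utile-disu}). For $\lambda^N_{yz}(X)\le\varphi^N_{yz}$, I would pick any $\bm{\gamma}\in\mathcal{M}^N_{yz}$ and observe that $l_X(\bm{\gamma})\le l(\bm{\gamma})=\varphi^N_{yz}$; since $\lambda^N_{yz}(X)$ is the minimum of $l_X$ over $\mathcal{M}^N_{yz}$, the bound follows. Finally, $\lambda^N_{yz}(X)\le\delta^N_{yz}(X)$ is precisely the first assertion of Proposition~\ref{caratt-lambda}.

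The central step is $\varphi^N_{yz}(X)\le\lambda^N_{yz}(X)$. Choose $\bm{\gamma}=(\gamma_j)_{j\in[m]}\in\mathcal{M}^N_{yz}(X)$, so that $m=\varphi^N_{yz}$ and $l_X(\bm{\gamma})=\lambda^N_{yz}(X)$, and let $\bm{\gamma}'$ be the sub-sequence consisting of those components $\gamma_j$ that do not pass through $X$; it has length $m-\lambda^N_{yz}(X)$. Each such $\gamma_j$ has all of its vertices in $V\setminus X$, hence uses no arc incident to $X$, so every arc $a$ of $\gamma_j$ satisfies $c_X(a)=c(a)\ge1$ and therefore $\gamma_j\in P^{N_X}_{yz}$. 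To see that $\bm{\gamma}'$ is arc-disjoint in $N_X$, fix $a\in A$: if $a$ is incident to $X$, then no component of $\bm{\gamma}'$ uses $a$, whence $|\{j:a\in A(\gamma'_j)\}|=0=c_X(a)$; if $a$ is not incident to $X$, then $|\{j:a\in A(\gamma'_j)\}|\le|\{j:a\in A(\gamma_j)\}|\le c(a)=c_X(a)$. In both cases~\eqref{ind-flow-eq} holds for $N_X$, so $\bm{\gamma}'\in\mathcal{S}^{N_X,\,m-\lambda^N_{yz}(X)}_{yz}$. Applying~\eqref{lex:disjointpaths} to $N_X$ yields $\varphi^{N_X}_{yz}=\lambda^{N_X}_{yz}\ge m-\lambda^N_{yz}(X)=\varphi^N_{yz}-\lambda^N_{yz}(X)$, which rearranges to $\varphi^N_{yz}(X)=\varphi^N_{yz}-\varphi^{N_X}_{yz}\le\lambda^N_{yz}(X)$. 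No case distinction on whether $X$ meets $\{y,z\}$ is needed: if $y\in X$ or $z\in X$, then $\bm{\gamma}'$ is just the empty sequence and the inequality reads $0\le\lambda^N_{yz}(X)$.

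Putting the four links together gives $0\le\varphi^N_{yz}(X)\le\lambda^N_{yz}(X)\le\min\{\delta^N_{yz}(X),\varphi^N_{yz}\}$. The one point requiring a little care is the arc-disjointness verification for $\bm{\gamma}'$ in $N_X$, i.e.\ treating the arcs incident to $X$ separately; everything else is routine.
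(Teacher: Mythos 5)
Your proof is correct and follows essentially the same route as the paper's: the outer inequalities are dispatched exactly as the paper does (via \eqref{utile-disu}, the trivial bound $l_X(\bm{\gamma})\le l(\bm{\gamma})$, and Proposition \ref{caratt-lambda}), and the central link $\varphi^N_{yz}(X)\le\lambda^N_{yz}(X)$ is obtained, as in the paper, by restricting a sequence in $\mathcal{M}^N_{yz}(X)$ to its components avoiding $X$ and invoking \eqref{lex:disjointpaths} for $N_X$. The only difference is that you spell out the arc-disjointness verification of $\bm{\gamma}'$ in $N_X$, which the paper leaves implicit.
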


\begin{proof} 
Let us prove first that $\varphi_{yz}^N(X)\ge 0$. Consider the network $N_X$ and observe that, for every $a\in A$,  $c_X(a)\le c(a)$. 
Thus, by \eqref{utile-disu}, $\varphi_{yz}^N\ge\varphi^{N_X}_{yz}$. We deduce then that $\varphi_{yz}^N(X)=\varphi_{yz}^N-\varphi^{N_X}_{yz}\ge 0$, as desired.

Let us prove now that $\varphi_{yz}^N(X)\leq \lambda_{yz}^N(X)\leq \varphi_{yz}^N$.
Let $\bm{\gamma}\in\mathcal{M}^{N}_{yz}(X).$ Then $l(\bm{\gamma})=\varphi^N_{yz}$ and $l_X(\bm{\gamma})=\lambda_{yz}^N(X)$.
Let $\bm{\gamma}'$ be a sequence of arc-disjoint paths having as components those components of $\bm{\gamma}$ not passing through $X$. Thus, 
$\bm{\gamma}'\in\mathcal{S}^{N_X}_{yz}$ and $l(\bm{\gamma}')=\varphi^N_{yz}-\lambda_{yz}^N(X) \le  \lambda^{N_X}_{yz} =\varphi^{N_X}_{yz}$, where the last equality follows from \eqref{lex:disjointpaths}. As a consequence,
$\varphi_{yz}^N(X)= \varphi^N_{yz}-\varphi^{N_X}_{yz}\le \lambda_{yz}^N(X)$.
Next note that $\lambda^{N}_{yz}(X)\le l_X(\bm{\gamma}) \le l(\bm{\gamma}) = \varphi^{N}_{yz}$. 

Finally the fact that $\lambda_{yz}^N(X)\leq \delta_{yz}^N(X)$ follows from Proposition \ref{caratt-lambda}.
\end{proof}

By Proposition \ref{bang} we already know that it may happen that  $\lambda_{yz}^N(X)> \varphi_{yz}^N(X).$
The next proposition shows that also the equality $\delta_{yz}^N(X)=\lambda_{yz}^N(X)$ does not hold true in general when $X$ is not a singleton.

\begin{proposition}\label{esempio-cammino}	
There exist $N=(K_V,c)\in\mathscr{N}, X\subseteq V$ and $y,z\in V$ distinct such that  $\delta_{yz}^N(X)> \lambda_{yz}^N(X).$
\end{proposition}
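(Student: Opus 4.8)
The goal is to exhibit a single network $N$, a distinguished pair $y,z$ and a set $X$ with $|X|\ge 2$ for which the chain in Proposition \ref{ineq-utile} is strict at the last inequality, i.e. $\delta^N_{yz}(X)>\lambda^N_{yz}(X)$. Since Proposition \ref{caratt-lambda} already guarantees $\delta^N_{yz}(X)\ge\lambda^N_{yz}(X)$ always, the task is purely to produce one explicit witness; the proof is by example, exactly as in Proposition \ref{bang}. The natural first move is to reexamine the inequality $f_{\bm{\gamma}}(X)=\sum_{x\in X}l_x(\bm{\gamma})\ge l_X(\bm{\gamma})$ from the proof of Proposition \ref{caratt-lambda}: the slack here is precisely the number of paths that pass through \emph{two or more} vertices of $X$, counted with multiplicity. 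So I want a network in which every maximum sequence of arc-disjoint paths (equivalently, via Proposition \ref{dec-x}, the path-part of every decomposition of a maximum flow) is forced to route some path through at least two vertices of $X$, yet no path of that sequence can be avoided — that is, $\lambda^N_{yz}(X)$ counts that path once while $\delta^N_{yz}(X)=f(X)$ counts it twice (or more).

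**Constructing the example.** The cleanest construction: take $y$ and $z$, and arrange $X=\{x_1,x_2\}$ so that in \emph{every} maximum flow both $x_1$ and $x_2 $ carry flow, but a maximum sequence of arc-disjoint paths can be chosen in which a single path $\gamma$ visits both $x_1$ and $x_2$ while the remaining paths avoid $X$ entirely. Then $\lambda^N_{yz}(X)=1$ (one path meets $X$), whereas for the flow $f_{\bm{\gamma}}$ associated with that sequence we have $f_{\bm{\gamma}}(x_1)=f_{\bm{\gamma}}(x_2)=1$, hence $f_{\bm{\gamma}}(X)=2$, and moreover one should check that \emph{no} maximum flow achieves $f(X)=1$, so that $\delta^N_{yz}(X)=2$. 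A small concrete candidate: let $V=\{y,x_1,x_2,z\}$ with arcs $y\to x_1$ (cap.\ 1), $x_1\to x_2$ (cap.\ 1), $x_2\to z$ (cap.\ 1), giving one $yx_1x_2z$ path, together with a parallel route $y\to z$ realized through an auxiliary vertex $w\notin X$ (arcs $y\to w$, $w\to z$, each cap.\ 1), so $\varphi^N_{yz}=2$. In every maximum flow the unit on the $x_1x_2$-branch is forced, so $f(x_1)=f(x_2)=1$ and $f(X)\ge 2$, giving $\delta^N_{yz}(X)=2$; but the maximum sequence $\bm{\gamma}=(y x_1 x_2 z,\; y w z)$ has $l_X(\bm{\gamma})=1$, so $\lambda^N_{yz}(X)\le 1$, and since $\varphi^N_{yz}(X)=\varphi^N_{yz}-\varphi^{N_X}_{yz}=2-1=1$ we get $\lambda^N_{yz}(X)=1$ by Proposition \ref{ineq-utile}. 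Hence $\delta^N_{yz}(X)=2>1=\lambda^N_{yz}(X)$.

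**What needs checking and where the difficulty lies.** The routine verifications are: $\varphi^N_{yz}=2$ (an explicit flow plus a cut of capacity $2$), $\varphi^{N_X}_{yz}=1$ (after zeroing all arcs incident to $x_1$ or $x_2$, only the $y$–$w$–$z$ route survives), and the membership $\bm{\gamma}\in\mathcal{M}^N_{yz}$. The one genuinely substantive point — the analogue of the case analysis in Proposition \ref{bang} — is the claim $\delta^N_{yz}(X)\ge 2$, i.e.\ that every maximum flow puts positive flow on both $x_1$ and $x_2$. I would argue this via the conservation law: in any flow of value $2$, the cut separating $\{y,w\}$ from $\{x_1,x_2,z\}$, or more simply the arc $x_2\to z$ together with the arc $w\to z$, must be saturated (their total capacity $2$ equals the flow value and these are the only arcs into $z$), so $f(x_2\to z)=1$, whence $f(x_1\to x_2)=1$ by conservation at $x_2$, hence $f(x_1)\ge 1$ and $f(x_2)\ge 1$, so $f(X)\ge 2$. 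Combined with $\delta^N_{yz}(X)\le f_{\bm{\gamma}}(X)=2$, this gives $\delta^N_{yz}(X)=2$ and completes the proof. The expected main obstacle is simply packaging this forcing argument cleanly — or, if one instead reuses a richer network (e.g.\ the one from Figure \ref{fig:BJ} with a suitably chosen $X$), verifying the flow values and the decomposition structure there; the single-branch construction above avoids that complication.
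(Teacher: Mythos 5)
Your proof is correct and takes essentially the same approach as the paper: both exhibit a small explicit network in which a single path traverses both vertices of $X=\{x_1,x_2\}$, so that $l_X$ counts it once while $f(X)$ counts it twice. The paper's witness is even simpler --- just the bare path $y\,x_1\,u\,x_2\,z$ with unit capacities, giving $\delta^N_{yz}(X)=2>1=\lambda^N_{yz}(X)$ --- whereas your parallel $y$--$w$--$z$ branch is harmless but unnecessary.
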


\begin{proof} Consider the network $N$ in Figure \ref{fig:cammino} and $X=\{x_1,x_2\}$. 
\begin{figure}[t]
\begin{center}
\begin{tikzpicture}
\begin{scope}
    \node[style={circle,thick,draw,fill=white}] (y) at (0,0) {$y$};
    \node[style={circle,thick,draw,fill=lightgray}] (a) at (2,0) {$x_1$};
    \node[style={circle,thick,draw,fill=lightgray}] (b) at (4,0) {$u$};
    \node[style={circle,thick,draw,fill=lightgray}] (c) at (6,0){$x_2$};
    \node[style={circle,thick,draw,fill=white}] (z) at (8,0) {$z$} ;
\end{scope}
\begin{scope}[>=stealth, every edge/.style={thick,draw}]
   \path [->] (y) edge node[pos=0.5,anchor=south]  {$1$} (a);
    \path [->] (a) edge node[pos=0.5,anchor=south]  {$1$} (b);
    \path [->] (b) edge node[pos=0.5,anchor=south]  {$1$} (c);
   \path [->] (c) edge node[pos=0.5,anchor=south]  {$1$} (z);
		
\end{scope}
\end{tikzpicture}
\end{center}
\caption{}\label{fig:cammino}
\end{figure}
Then  $\delta_{yz}^N(X)=2>\lambda_{yz}^N(X)=1.$
\end{proof}

\section{Two flow group centrality measures}\label{threecm}

Consider the ordered pairs of the type $((K_V,c),X)$, where 
$(K_V,c)\in \mathscr{N}$ and $X\subseteq V$ and denote the set of such pairs by $\mathscr{U}$.
A {\it group centrality measure} ({\sc gcm}) is a function from $\mathscr{U}$ to $\mathbb{R}$.  If $\mu$ is a group centrality measure, we denote the value of $\mu$ at $(N,X)\in\mathscr{U}$ by $\mu^N(X)$ and we interpret it as a measure of the importance of the set of vertices $X$ in $N$. 
A variety of group centrality measures have been obtained by properly generalizing classic centrality measures (Everett and Borgatti 1999, 2005).

By means of the numbers $\varphi_{yz}^N(X)$ and $\lambda_{y,z}^N(X)$, we define in this section two new group centrality measures. 
Recall that, given $N=(K_V,c)\in\mathscr{N}$, $y,z\in V$ distinct and $X\subseteq V$, we have that
$0\le \varphi_{yz}^N(X)\le \varphi_{yz}^N(V)$ and $0\le \lambda_{yz}^N(X)\le \lambda_{yz}^N(V)$. Moreover,
$\varphi_{yz}^N(V)=\lambda_{yz}^N(V)=\varphi_{yz}^N=\lambda_{yz}^N$.

\begin{definition}
The {\it full flow vitality }{\sc gcm},
denoted by $\varphi$ is defined, for every $(N,X) \in \mathscr{U}$ with $N=(V,A,c)$, by
\begin{equation}\label{phi-cent}
\varphi^N(X):=\sum_{\substack{{(y,z)\in A}\\{\varphi^N_{yz}(V)>0}}} \frac{\varphi^N_{yz}(X)}{\varphi^N_{yz}(V)} .
\end{equation}
\end{definition}
Note that $\varphi$ is a vitality measure in the sense of Kosch\"utzki et al. (2005). Indeed its value at a given set of vertices $X$ takes into consideration how much eliminating the set $X$ from the network impacts the global flow of the network.

We emphasize that in \eqref{phi-cent} we are summing over the maximum set of arcs which makes the definition  meaningful, that is over the set
$\{(y,z)\in A: \varphi^N_{yz}>0\}$. That corresponds to the idea of a uniform treatment for the vertices in the network and is what the adjective {\it full} in the name of $\varphi$ refers to.\footnote{Kosch\"utzki et al. (2005) call max-flow betweenness vitality of the vertex $x$ of an undirected connected network $N$, the number $\displaystyle{\sum_{\substack{{(y,z)\in A}\\{y\neq x\neq z}}}\frac{\varphi^{N}_{yz}(x)}{\varphi^{N}_{yz}}}$, where the sum is implicitly taken in a subset of $\{(y,z)\in A: \varphi^N_{yz}>0\}$. See also Borgatti and Everett (2006, p.475). }
Other choices are possible for applications in which a differentiation of vertices is reasonable.

\begin{definition}
The  {\it full flow betweenness} {\sc gcm},
denoted by $\lambda$, is defined, for every $(N,X) \in \mathscr{U}$ with $N=(V,A,c)$, by
\begin{equation}\label{lambda-cent}
\lambda^N(X):=\sum_{\substack{{(y,z)\in A}\\{\lambda^N_{yz}(V)>0}}}\frac{\lambda^N_{yz}(X)}{\lambda^N_{yz}(V)}.
\end{equation}
\end{definition}

Note that  $\lambda$ is a typical betweenness measure because it takes into considerations to which extent the paths in the network are forced to pass through a set of vertices. In line with \eqref{phi-cent}, also in \eqref{lambda-cent} we are summing over the maximum set of arcs which makes the definition  meaningful, that is over the set $\{(y,z)\in A: \lambda^N_{yz}>0\}$ and that explains the adjective {\it full} in the name of $\lambda$.

Being a betweenness measure makes $\lambda$ conceptually different from $\varphi$, which is instead a typical vitality measure. Anyway some comparison is surely possible.
First of all it is immediate to check that $\lambda$ and $\varphi$ coincides when $X=\{x\}$. Indeed by Theorem \ref{main} we know that, for every $y,z\in V$ distinct, we have  $\lambda_{yz}^N(x)= \varphi^N_{yz}(x)$. Thus, we 
also have
\[
\varphi^N(x)= \sum_{\substack{{(y,z)\in A}\\{\varphi^{N}_{yz}>0}}} \frac{\varphi^N_{yz}(x)}{\varphi^N_{yz}}=
\sum_{\substack{{(y,z)\in A}\\{\varphi^{N}_{yz}>0}}} \frac{\lambda^N_{yz}(x)}{\varphi^N_{yz}}=  \lambda^N(x).
\]
Moreover, by Proposition \ref{ineq-utile}, it is immediately deduced that $\varphi^N(X)\leq \lambda^N(X).$
Finally, Proposition \ref{bang} shows that when $|X|\geq 2$ we generally have $\varphi^N(X)\neq \lambda^N(X).$

We finally observe that, from the computational point of view, there is an important difference between $\varphi$ and  $\lambda$. Indeed, in order to compute the term $\varphi^N_{yz}(X)=\varphi^N_{yz}-\varphi^{N_X}_{yz}$, it is enough to dispose just of a single maximum flow  from $y$ to $z$ in $N$ and a single maximum flow  from $y$ to $z$ in $N_X$. On the other hand, in order to compute the term $\lambda^N_{yz}(X)$, one needs, in principle, to know the decompositions of all the possible maximum flows from $y$ to $Z$ in $N$.

\section{Conclusions and further research}\label{citation} 

We have clarified the equality between two fundamental concepts in network theory that surprisingly were identified without a solid  formal argument: the minimum number $\lambda_{yz}^N(x)$ of paths passing through the  vertex $x$ in a maximum sequence  of  arc-disjoint  paths from $y$ to $z$ in the network $N$ and the number $\varphi^N_{yz}(x)$ expressing the falling of maximum flow value from $y$ to  $z$ in $N$ when the capacity of all the arcs incident to $x$ is reduced to zero. 
The proof of that fact has involved a tricky analysis of the relationship between paths and flows which goes beyond the original goal and is interesting in itself.
We also proved that the natural generalizations of $\lambda_{yz}^N(x)$ and $\varphi_{yz}^N(x)$ to sets  of vertices are not, in general,  equal. 

Our analysis has led to the definition of two conceptually different centrality measures $\varphi$ and $\lambda$. The contexts in which they could fruitfully applied are in principle many. Just to give an example, consider a scientific community $V$ of scholars and, for every $x,y\in V$, the number $c(x,y)$ of times that, within a certain fixed period of time, the researcher $x\in V$ cited the researcher $y\in V$. Construct then the corresponding citation network
$N=(K_V,c)\in\mathscr{N}$. It is reasonable to think that, for $x,y,z\in V$, if $x$ cited $y$ and $y$ cited $z$, then indirectly $x$ cited $z$ so that $z$ gains  prestige not only from $y$ but also from $x$ and thus, more generally, $z$ gains prestige from any path in $N$ having $z$ as endpoint.\footnote{A similar approach by path consideration is used for sport competitions in Bubboloni and Gori (2018) with the scope to obtain a ranking of teams.}
Consider, in particular, a situation in which two groups of researchers, say $X_1$ and $X_2$, received the same total amount of citations from the scholars outside the groups, that is, $\sum_{a\in A_{X_{1}}^-}c(a)=\sum_{a\in A_{X_{2}}^-}c(a)$.
 Suppose that you want to diversify $X_1$ and $X_2$ putting in evidence the quality of those citations. For $X_i$, where $i\in\{1,2\}$, that quality can be evaluated by looking at the set $Y_i$ of  scholars who cited the scholars in $X_i$ and taking into account the number of citations that the scholars in $Y_i$ themselves received. On the other hand, also the quality of the citations received by the scholars in $Y_i$ is important and that can be in turn evaluated by looking at the set $Z_i$ of the scholars who cited the scholars in $Y_i$. That reasoning can be continued so that the quality of the citations received by the scholars in $X_i$ can effectively emerge only  by a global approach which takes into consideration the configuration and complexity of the whole citation network  (see, for instance Bouyssou and Marchant, 2016).
That can operatively be committed exactly by computing which between $\lambda^N(X_1)$ and $\lambda^N(X_2)$ is larger.  
 Another reasonable idea could be instead to
 look at the impact on the amount of direct and indirect citations in the network caused by a hypothetical absence from the scientific scenario of  the scholars in $X_i$, that is, comparing  $\varphi^N(X_1)$ and $\varphi^N(X_2)$.  Indeed, if $\varphi^N(X_1)>\varphi^N(X_2)$, then the absence of $X_1$ mostly damages the scientific community in its dynamic exchange of contacts,  with the number of global citations becoming poorer. 
 
We also emphasize that the formal approach we used seems very promising for dealing with the properties of our centrality measures, in particular, with those invoked by Sabidussi (1966) as the main desirable. As well-known an axiomatic satisfactory definition of centrality is missing in the literature. The presence or absence of certain properties can though help in deciding which measure better fit in a certain application, as largely recognized in the bibliometric literature which is recently oriented in using methods from social choice theory (see for instance Chambers and Miller, 2014; Csat\'o, 2019).

Many other aspects surely deserve to be deepen and will constitute further lines of research: computational aspects, construction of algorithms for the effective calculation of $\varphi$ and $\lambda$ and, of course, implementation of them on concrete networks in order to discover in which sense and in which kind of networks, our centrality measures behave better than other classic centrality measures.

\vspace{7mm}

\noindent {{\bf Acknowledgments}} We wish to thank J\o rgen Bang-Jensen for his illuminating example which led to Proposition \ref{bang}, and for
 the kind permission to use his personal communication. We also thank three anonymous referees whose advice helped in improving the readability of the paper. Daniela Bubboloni is partially supported by GNSAGA of INdAM (Italy).

\vspace{7mm}

\noindent {\Large{\bf References}}

\vspace{3mm}

\noindent Ahuja, R.K., Magnanti, T.L., Orlin J.B., 1993.  Network Flows. Prentice Hall, Englewood Cliffs, NJ.
\vspace{2mm}

\noindent Bang-Jensen, J., Gutin, G., 2008. Digraphs Theory, Algorithms and Applications. Springer.
\vspace{2mm}

\noindent Bang-Jensen, J., 2019. Private communication.
\vspace{2mm}



\noindent  Borgatti, S.P., Everett, M.G., 2006. A graph-theoretic framework for classifying centrality measures. Social Networks 28, 466-484.
\vspace{2mm}

\noindent  Borgatti, S.P., Everett, M.G., Freeman, L.C. 2002. Ucinet $6$ for Windows: Software for Social Network Analysis. Harvard, MA: Analytic Technologies.
\vspace{2mm}

\noindent  Bouyssou, D., Marchant, T., 2016. Ranking authors using fractional counting of citations: An axiomatic approach. Journal of Informetrics 10, 183-199.
\vspace{2mm}

\noindent  Bubboloni, D, Gori, M., 2018. The flow network method. Social Choice and Welfare 51, 621-656.
\vspace{2mm}

\noindent Chambers, C.P., Miller, A.D., 2014.  Scholarly influence. Journal of Economic Theory 151, 571-583.
\vspace{2mm}

\noindent Csat\'o, L., 2019. Journal ranking should depend on the level of aggregation. Journal of Informetrics 13, https://doi.org/10.1016/j.joi.2019.100975.
\vspace{2mm}

\noindent Eboli, M., 2019. A flow network analysis of direct balance-sheet contagion in financial networks. Journal of Economic Dynamics \& Control 103, 205-233.
\vspace{2mm}

\noindent   Everett, M.G., Borgatti, S.P., 1999. The centrality of groups and classes. Journal of Mathematical Sociology 23, 181-201.
\vspace{2mm}

\noindent  Everett, M.G., Borgatti, S.P., 2005. Extending Centrality. In P. Carrington, J. Scott, \& S. Wasserman (Eds.), Models and Methods in Social Network Analysis (Structural Analysis in the Social Sciences, pp. 57-76). Cambridge, Cambridge University Press.
\vspace{2mm}

\noindent  Ford, L.R., Fulkerson, D.R., 1956. Maximal flow through a network. Canadian Journal of Mathematics 8, 399-404.
\vspace{2mm}

\noindent  Freeman, L., 1979. Centrality in networks: I. Conceptual clarification. Social networks 1, 215-239.
\vspace{2mm}

\noindent Freeman, L.,  Borgatti, S. P., White, D. R., 1991. Centrality in valued graphs: a measure of betweenness based on network flow. Social Networks 13, 141-154.
\vspace{2mm}

\noindent Ghiggi, S., 2018. Flow centrality on networks, Master Thesis (supervisor D. Bubboloni), Department of Mathematics and Informatics, University of Florence (Italy).
\vspace{2mm}

\noindent  G\'omez, D., Figueira J.R., Eus\'ebio, A., 2013.  Modeling centrality measures in social network analysis using bi-criteria network flow optimization problems. European Journal of Operational Research 226, 354-365.
\vspace{2mm}

\noindent  Kosch\"utzki, D., Lehmann, K.A., Peeters, L., Richter, S., Tenfelde-Podehl,
D., Zlotowski, O., 2005. Centrality Indices. Chapter 3 in Brandes and Erlebach Network Analysis: Methodological Foundations, Volume 3418 of LNCS Tutorial, Springer.
\vspace{2mm}

\noindent  Newman, M., 2005. A measure of betweenness centrality based on random walks. Social Networks 27, 39-54.
\vspace{2mm}

\noindent  R Development Core Team. 2007.  R: A language and environment for Statistics and Computing. R foundation for Statistical Computing, Vienna, Austria. ISBN 3-900051-07-0, Version 2.6.1, URL http://www.R-project.org/. 
\vspace{2mm}

\noindent  Sabidussi, G., 1966. The centrality index of a graph. Psychometrika 31, 581-603.
\vspace{2mm}

\noindent Trimponias, G., Xiao, Y., Xu, H., Wu, X., Geng, Y.,  2017. Node-Constrained Traffic Engineering: Theory and Applications.  IEEE/ACM Transactions on Networking  27, 1344-1358
\vspace{2mm}

\noindent  Zweig, K., A., 2016.  Analysis Literacy. Springer.
\vspace{2mm}

\end{document}